\documentclass[12pt]{article}

\usepackage{setspace}
\usepackage[table]{xcolor}
\usepackage{amsmath, amssymb, amsthm, graphicx,bm}
\usepackage{tikz}
\usetikzlibrary{tikzmark,bending,arrows.meta}
\usepackage[ total={6.5in, 8.5in}]{geometry}
\usepackage{mathtools}
\usepackage[normalem]{ulem}
\usepackage{multirow}
\usepackage{xcolor}
\usepackage[
backend=biber, 
style=nature,
sorting=nyt
]{biblatex}
\newcommand{\ruleset}[1]{\textsc{#1}}
\newcommand{\PN}[2]{\textnormal{PN}\ensuremath{\left(#1,#2\right)}}
\newcommand{\CN}[2]{\textnormal{CN}\ensuremath{\left(#1,#2\right)}}
\newcommand{\SN}[2]{\textnormal{SN}\ensuremath{\left(#1,#2\right)}}
\newcommand{\defeq}{\overset{\mathrm{def}}{=\joinrel=}}
\newcommand{\IRP}{Invariance Reduction Process}

\def\MS{\text{(MS\ensuremath{'})}}
\def\TS{\text{(TS\ensuremath{'})}}
\def\LT{\text{L(TS\ensuremath{'})}}
\def\RT{\text{R(TS\ensuremath{'})}}
\def\LM{\text{L(MS\ensuremath{'})}}
\def\RM{\text{R(MS\ensuremath{'})}}

\def\N{\ensuremath{\mathcal{N}}}

\def\P{\ensuremath{\mathcal{P}}}

\def\A{\ensuremath{\mathcal{A}}}
\def\B{\ensuremath{\mathcal{B}}}
\def\p{\ensuremath{\bm{p}}}

\def\z{\ensuremath{\bm{z}}}

\def\Nim{\ruleset{Nim}}

\newtheorem{theorem}{Theorem}[section]
\newtheorem{proposition}[theorem]{Proposition}

\newtheorem{lemma}[theorem]{Lemma}
\newtheorem{definition}[theorem]{Definition}
\newtheorem{remark}[theorem]{Remark}
\newtheorem{example}[theorem]{Example}

\newtheorem{conjecture}{Conjecture}
\newtheorem{open}{Open Problem}

\newcommand{\cref}[1]{Corollary~\textup{\ref{#1}}}

\definecolor{darkgreen}{rgb}{0.0, 0.8, 0.0}

\DeclareMathOperator{\mex}{mex}
\DeclareMathOperator{\im}{im}

\title{The \IRP~- a New Tool to Solve Circular Nim and Related Games}

\author{Balaji R. Kadam\footnote{Indian Institute of Technology Madras, Chennai, India}  
	\and 
	Matthieu Dufour\footnote{
    Universit\'e du Qu\'ebec \`a Montr\'eal, Montr\'eal, Canada 
    } 
    \and 
    Silvia Heubach\footnote{California State University Los Angeles, Los Angeles, USA
	}}

\date{\today}

\begin{document}
\maketitle

\begin{abstract}
We introduce the notion of invariant vectors of a game and develop the \IRP, which first uses reduction of positions via invariance and then zero and merge reductions of games to arrive at smaller, solved sub-games for closed subspaces of the positions. This process makes it much easier to prove that there are moves from \N-positions to \P-positions, and can also be used in some cases to show that there are no moves between \P-positions. This process is suitable for all variations of \ruleset{Nim}~whose rule sets form a simplicial complex. We rephrase \ruleset{SimplicialNim}  as \ruleset{SetNim} \SN{n}{\A} and derive results on the structure of the \P-positions in terms of invariant vectors, without needing the background and notation of simplicial complexes. We also show that invariant vectors differ from the circuits used to describe the \P-positions in  \ruleset{Simplicial Nim} and that invariant vectors have wider applicability compared to circuits. We apply the \IRP~to derive  results on the \P-positions of the family of \ruleset{PathNim} games where play is allowed on at least half the stacks, as well as for the \ruleset{CircularNim} games \CN{n}{k} with $n=7, k=3$ and $n=8,k=3$.
\vspace{.1in}

 {\bf Keywords}: Combinatorial games,  Circular Nim, Path Nim, Set Nim, Simplicial Nim, Invariance Reduction Process. 
\end{abstract}

\section{Introduction}\label{sec:intro}

\Nim~is a well-known combinatorial game that is at the core of combinatorial game theory. It was named and solved by C. Bouton~\cite{Bo1901}. \Nim~is played on $n$ stacks of tokens, and in each move, a player selects one stack and removes at least one, and as many as all tokens, from the stack. Due to the central role of \Nim, many variations of \Nim~have been introduced over the years. The most obvious generalization is to allow play on up to $k$ stacks, a game introduced by Moore~\cite{Mo1910}, and now known as \ruleset{Moore's $k$-Nim}. A ``slow" version of \ruleset{Moore's $k$-Nim} was studied by~\cite{GHHC20}, where in each move, only a single token can be removed from the selected stacks. Other variations have imposed a geometric structure on the stacks, for example, \ruleset{CircularNim} \CN{n}{k}, where $n$ stacks of tokens are arranged in a circle. In each move, a player selects $k$ consecutive stacks and then removes at least one and as many as all tokens (in total) from the $k$ stacks. \ruleset{CircularNim} \CN{n}{k} has been solved for some families of games as well as individual games~\cite{DuHe09,DHV21}.  Other variations include playing \Nim~on a graph~\cite{BeCoGu2001}, where tokens are placed on the edges of the graph and then a player selects a vertex and removes at least one (and as many as all) token(s) in total from the adjacent edges. A variation on that theme is \ruleset{MatrixNim}~\cite{Hol1958}, which is played on a grid representing $n$ rows and $m$ columns, with stacks of tokens at the grid points. In each move, a player can play on the stacks of a single row, or on any of the stacks that are contained in $m-1$ columns, removing at least one token from one of the stacks.   A more recent paper~\cite{KaSh2025} considered a number of variations of playing \Nim~on the faces of a cube, where tokens are placed on the faces, and the different variations arise from whether a vertex, an edge, a face, or a rotation is selected by the player. In each move, at least one of the tokens on faces adjacent to the selected cube component are removed. We highlight these specific variations of \Nim~as they all can be described as \ruleset{SetNim} games, which we will define below. 

\Nim~and its variations are impartial combinatorial games, that is, two-player games with no hidden information or randomness where both players have the same moves available to them. The positions of impartial games fall into one of two categories, \N-positions (those from which the next player can win if playing optimally) and \P-positions (those from which the next player is bound to lose if the opponent plays optimally). Thus, knowing the \P-positions allows a player in a \N-position to win by always moving to a \P-position. The set of \P-positions is uniquely determined by the fact that there is no move from a \P-position to another \P-position, and that from every \N-position, there is a move to a \P-position. We will assume a basic familiarity of the reader with impartial combinatorial games. Details can be found in~\cite{AlNoWo2019, BeCoGu2001, Sie2013}.

\Nim~and the variations described above fall under the umbrella of \ruleset{SimplicialNim}, which was studied by Ehrenborg and Steingrimmson~\cite{ES1996} and by Horrocks~\cite{Hor2010}.  The authors use language from algebraic topology and algebra to describe a general \Nim~game played on the vertices of a graph, with the allowed moves described by sets of vertices. \Nim~and its variations discussed above have in common that once a player selects a set of vertices to play on, s/he can play on any or all of these stacks. This implies that if $A$ is a set of vertices that describes an allowed move, then any set $B \subseteq A$ also represents an allowable move set. Thus the allowable move sets form a simplicial complex, as play on any individual stack is always allowed (see Definition \ref{def:simpcomp}).

We will now give a slightly modified definition of the \ruleset{SimplicialNim} game, which we will refer to as \ruleset{SetNim}. This modified definition is, in our opinion, more natural for a reader without knowledge of the extensive definitions and notation of simplicial complexes. In addition, our definition of the game will allow us to easily describe reductions of games to sub-games without cumbersome notation. However, since our definition is just a restatement, we will still be able to utilize certain results and definitions (appropriately restated) from~\cite{ES1996}. In addition, our focus will be on invariant vectors, which play a role similar to circuits in simplicial complexes, but differ from them for most games. 

\begin{definition}
 The game \ruleset{SetNim} \SN{n}{\A} is played on a set of $n$ stacks of tokens which are placed on the vertices $V$ of a graph.\footnote{Vertices may be labeled as $1,\dots,n$, as $0,1,\dots, n-1$, or as  $a,b,c,\dots$, depending on what is convenient to describe the move set. The labeling usually mirrors the way we describe the positions of the game.} The collection of sets $\A=\{A_1,A_2,\ldots,A_{\ell}\}$ with $\cup_{i=1}^{\ell} A_i =V$ defines the allowed moves, where each set $A_i$ in the collection represents a set of vertices that the player can play on. Playing on such a set means to take at least one 
token from at least one stack of the particular set. We will only list the maximal allowable move sets. The last player to move wins. 
\end{definition}

The unique terminal position in \SN{n}{\A} is the position where all stacks are zero, which we will denote by $\bm{0}$. We start by giving some examples of known games that can be described as a \ruleset{SetNim}~game. 

\begin{example}
    The game of \Nim~on $n$ stacks can be expressed as the game \SN{n}{\A_1}~where $\A_1$ is the set of singletons, $\{\{1\},\ldots,\{n\}\}$. \ruleset{Moore's $k$-Nim} can be described as \SN{n}{\A_2}~where $\A_2$ is the collection of all $k$-element subsets of $\{1, \ldots,n\}$.    \ruleset{CircularNim} \CN{n}{k} is played on  $k$ consecutive stacks of a cycle graph. Thus, \CN{n}{k}~equals \SN{n}{\A_3}~with $$\A_3=\{\{i,i+1,\ldots, i+k-1\} \mid  i=0,\ldots, n-1\},$$ where the vertex labels in $\A_3$ are understood to be modulo $n$.
\end{example}

Another game that can be described under the umbrella of \ruleset{SetNim}~is the game \ruleset{PathNim}. 

\begin{definition} The game \ruleset{PathNim} \PN{n}{k} is played on stacks of tokens arranged on a path with $n$ vertices.\footnote{This game was described in~\cite{ES1996} with a different meaning of  parameter $n$ but identical meaning of parameter $k$.}  In each move, a player selects $k$ consecutive vertices and removes at least one token  from at least one of the selected stacks. The last player to move wins. This corresponds to \ruleset{SetNim}~\SN{n}{\A}~with 
$$\A=\{\{i,i+1,\ldots, i+k-1\} \mid  i=1,\ldots, n-k+1\}. $$
\end{definition}

Note that a position of \ruleset{CircularNim} \CN{n}{k} that contains a segment of at least $k-1$ consecutive zeros naturally reduces to \ruleset{PathNim} \PN{n-(k-1)}{k}. This indicates that knowledge of how to win certain \ruleset{PathNim} games will be important in solving \ruleset{CircularNim} games. In Theorem~\ref{thm:P-pos path} we will give results on the \P-positions of \ruleset{PathNim} games where play is allowed on at least half of the stacks. \\

\noindent Here is an overview of the contents of the sections that follow and our main results: 
\begin{itemize}
  \item In Section~\ref{sec:background} we will define our basic notation as well as zero and merge reduction of games to sub-games. The resulting sub-games are congruent to the game from which they originate in terms of classification of associated positions as \N- or \P-positions and correspondence of the moves between pairs of associated positions. This provides one key ingredient in our reduction process. We provide examples of these reductions and a summary of the \P-positions of known games that we use in subsequent sections. 
    \item In Section~\ref{sec:invar} we define invariant vectors for a set of positions and outline their usefulness in both finding the pattern of the \P-positions of a game and in proving the hypothesized pattern. A key tool is the {\bf \IRP}, which simplifies the proof that from any \N-position of the game there is a move to a \P-position. We prove that the \IRP ~produces such a move for each subspace of \P-positions. 
       \item Many of the reductions that occur when using the \IRP~lead to \ruleset{PathNim} games. In Section~\ref{sec:pathnim} we provide a result on the {\bf \P-positions of PN}$\bm{(n,k)}$ {\bf for }$\bm{k \geq \lceil \frac{n}{2}\rceil}$, that is, when play is allowed on at least half of the stacks  (Theorem~\ref{thm:P-pos path}). We also {\bf solve the game SN}$\bm{(6,\A)}$ with $\A=\{\{a,b,c\},\{b,c,d\},\{c,d,e\}$, $\{d,e,f\},\{a,f\}\}$ (Theorem~\ref{thm:H}). This game plays an crucial role when solving \CN{7}{3}.  
    \item In the following two sections we use the \IRP~to {\bf solve two \ruleset{CircularNim} games}: {\bf CN(7,3)} 
    in Section~\ref{sec:CN_7-3} ( Theorem~\ref{thm:P7-3}) and {\bf CN(8,3)} 
    in Section~\ref{sec:CN_8-3} (Theorem~\ref{thm:P83}).
    \item Section \ref{sec:Simp} takes us on  a little detour into simplicial complexes.  We provide the necessary definitions that allow us to {\bf compare and contrast invariant vectors and the circuits of simplicial complexes} and their role in determining the structure of the \P-positions of a \ruleset{SetNim} game. 
    \item In Section~\ref{sec:future}, we conclude with a summary and pose  open questions and conjectures for future study.
\end{itemize}

We start by defining our notation and important terms.

\section{Notation and Game Reductions} \label{sec:background}

A \emph{position} of a \ruleset{SetNim} game is a vector of the stack heights, $\p=(p_1,\ldots, p_n)$. We will write $\p \rightarrow \p'$ for a legal (allowed) move from $\p$ to an \emph{option} $\p'=(p_1',\ldots, p_n')$ and denote the minimum stack height of a position $\p$ by $\min(\p)$. The set of \P-positions of a game $G$ is denoted by $\P_G$. For ease of notation, we will use  $\P_{n,k}$ for either  \CN{n}{k} or \PN{n}{k}  games whenever the type of game is clear from the context. We will also refer to a stack or its number of tokens interchangeably when it is clear from the context what we mean.

The techniques employed in the proofs of our results can be described broadly as using reductions of both positions and games to utilize results on the \P-positions of smaller games to solve the larger games at hand. 

\begin{definition} Let $G=$ \SN{n}{\A} be a \ruleset{SetNim}~game on a graph with vertex set $V$. A game $\tilde{G}=$ \SN{m}{\B} on a subset of vertices $\tilde{V} \subset V$ is a \emph{sub-game} of $G$ if $m < n$ and for any $A \in \A$, there is a set $B\in \B$ such that the restriction $A\cap \tilde{V}$ of $A$ to $\tilde{V}$ satisfies  $A\cap \tilde{V} \subseteq B$.
\end{definition}

There are two main ways to reduce games with a larger number of stacks to games with a smaller number of stacks,  the \emph{zero reduction} and the  \emph{merge reduction}. A zero reduction can be used when, during game play, one or more stacks have been completely depleted. Such stacks no longer impact the game as one cannot play on them.  We remove these stacks from the sets $A_i$ because a move on a set of stacks that include an empty stack is equivalent to the corresponding move on only the non-empty stacks. Here is the formal definition. 

\begin{definition} [Zero reduction] Let $G=$ \SN{n}{\A} be a \ruleset{SetNim}~game on an underlying graph with vertex set $V.$ Let $Z=\{i \mid p_i=0\}$. Then the zero reduction sub-game $\tilde{G}$ is played on the stacks $\tilde{V}=V-Z$, and $\tilde{A_i}={A_i} \cap \tilde{V}$.
\end{definition}

\begin{remark} \label{rem:zero red} There is a one-to-one correspondence between $\p\in G$ and $\tilde{\p}\in \tilde{G}$ because $\tilde{p}_i=p_i$ for $i \notin Z$ and $p_i=0$ for $i\in Z$. Thus,  the two games have isomorphic game trees and therefore, their sets of \P-positions are identical with the adjustment made for the zero stacks of $\p$. 
\end{remark}

A merge reduction arises when there is a subset of vertices $C$ that have the property that if a move is allowed on one of those stacks, then one can also play on all of the other stacks of $C$. In this case, we can treat this collection of stacks as if it was a single stack for which the number of tokens is the sum of the tokens on the vertices of $C$. Replacing the individual stacks in the sets $A_i$ with the newly combined stack reduces the number of total stacks in the game, and potentially the maximal number of stacks we are allowed to play on.  Here is the formal definition.

\begin{definition} [Merge reduction]\!\!\footnote{This definition is a restatement of Definition 6.6 of~\cite{ES1996}.} Let $G=$ \SN{n}{\A} be a \ruleset{SetNim}~game on an underlying graph with vertex set $V$. Assume that a collection of stacks $C\subset V$ has the following property: For any set $A_i \in \A$, either $C\subseteq A_i$ or $C \cap A_i = \emptyset $. Then the merge reduction sub-game $\tilde{G}_{v^*}^C$ is played on the vertices $\tilde{V}=(V-C)\cup v^*$ (vertices of $C$ have been merged into a new vertex $v^*$) and the move sets  $\tilde{A}_i$ are defined by
$$\tilde{A_i}=\left\{\begin{tabular}{ll}
    $A_i$ & \text{if } $A_i\cap C=\emptyset$ \\
  $( A_i-C)\cup v^* $ & \text{otherwise }
\end{tabular}\right.$$
for $A_i \in \A$.
If $\p$ is a position of $G$, then the associated position $\tilde{\p}$ of $\tilde{G}_{v^*}^C$ is defined as $\tilde{p}_{v^*}=\sum_{i\in C}p_i$, and $\tilde{p}_i=p_i$ for  $i \notin C$.
 \end{definition}

Note that $\p$ and $\tilde{\p}$ have the same overall number of tokens. Furthermore, while there is a single reduced position $\tilde{\p}$ for every $\p\in G$, there are multiple positions $\p\in G$ that have the same reduced position. We will call the games $G$ and $\tilde{G}_{v^*}^C$ \emph{congruent} as their game outcomes are the same, with the merge-reduced game playing the role of a canonical form. Before we can state the relevant lemma, we remind the reader of the Sprague-Grundy function $g(\p)$, which is defined recursively as $g(\p)=\mex(\{g(\p') \mid \p \rightarrow \p'\})$, where $\mex(S)=\min(\mathbb{N}_0-S)$, that is, the smallest non-negative integer missing from the set $S$. The set of positions with Sprague-Grundy value zero is precisely the set of \P-positions of a game. 
\begin{lemma} \label{lem:merge red}
 Let $\p$ be a position of a game $G$ and $\tilde{\p}$ the associated position in the merge-reduced game $\tilde{G}^C_{v^*}$. Then the Grundy values $g(\p)$  and $g(\tilde{\p})$ are the same. In particular, $\p$ is a \P-position of $G$ if and only if  $\tilde{\p}$ is a \P-position of $\tilde{G}^C_{v^*}$. \end{lemma}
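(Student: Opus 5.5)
We prove the statement by strong induction on the total number of tokens $|\p|=\sum_i p_i$, which equals $|\tilde{\p}|$. The key is to compare the option sets of $\p$ and $\tilde{\p}$ under the reduction map $\pi:\p\mapsto\tilde{\p}$. We will establish the following two facts.

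\begin{itemize}
\item[(a)] \emph{Every move lifts to a reduced move.} If $\p\rightarrow\p'$ is a legal move in $G$, then $\tilde{\p}\rightarrow\tilde{\p}'$ is a legal move in $\tilde{G}^C_{v^*}$.
\item[(b)] \emph{Every reduced move is realized.} If $\tilde{\p}\rightarrow\tilde{\bm q}$ is a legal move in $\tilde{G}^C_{v^*}$, then there is a legal move $\p\rightarrow\p'$ in $G$ with $\pi(\p')=\tilde{\bm q}$.
\end{itemize}

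Given these, the induction hypothesis applies to every option, since options have strictly fewer tokens. It gives $g(\p')=g(\pi(\p'))$ for every option $\p'$ of $\p$. By (a) and (b), the set $\{\pi(\p')\mid \p\rightarrow\p'\}$ is exactly the option set of $\tilde{\p}$. Hence
$$\{g(\p')\mid \p\rightarrow\p'\}=\{g(\tilde{\bm q})\mid \tilde{\p}\rightarrow\tilde{\bm q}\},$$
and taking the mex of both sides yields $g(\p)=g(\tilde{\p})$. The base case is $\bm 0$, which maps to $\bm 0$, and both have Grundy value $0$. The \P-position statement follows because the \P-positions are exactly the positions with Grundy value zero.

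For (a), suppose the move $\p\rightarrow\p'$ is played on $A_i$, so it changes only stacks in $A_i$ and removes at least one token. There are two cases.
\begin{itemize}
\item If $A_i\cap C=\emptyset$, then $\pi$ is the identity on the affected coordinates, and the move is legal on $\tilde{A}_i=A_i$.
\item Otherwise, by the merge hypothesis $C\subseteq A_i$. The reduced move decreases coordinates in $A_i-C$ and decreases $v^*$ by the total removed from $C$. This is a move on $\tilde{A}_i=(A_i-C)\cup v^*$. It is nonzero because the total number of tokens drops.
\end{itemize}

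For (b), suppose the reduced move is played on $\tilde{A}_i$. If $v^*\notin\tilde{A}_i$, then $A_i\cap C=\emptyset$, and we copy the move directly. If $v^*\in\tilde{A}_i$, then $C\subseteq A_i$. We must realize the decrease $d=\tilde{p}_{v^*}-\tilde{q}_{v^*}$, where $0\le d\le \sum_{i\in C}p_i$, by removing tokens from the stacks of $C$. We do this greedily: remove from the stacks of $C$ one after another, never taking more than a stack holds, until $d$ tokens are gone. The other coordinates of $A_i-C$ are copied. The result is a legal move on $A_i$, since at least one token is removed, and it maps to $\tilde{\bm q}$.

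The step requiring the most care is (b). There we need the merge hypothesis $C\subseteq A_i$ precisely so that every stack of $C$ is available when distributing the removal. We also need the bound $d\le\sum_{C}p_i$, which guarantees that the greedy distribution exists.
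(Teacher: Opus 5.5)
Your proof is correct and follows the paper's argument: strong induction on the total number of tokens, matching the option sets of $\p$ and $\tilde{\p}$ under the merge map, and then taking the mex. Your part (b) explicitly shows that every option of $\tilde{\p}$ is the image of some option of $\p$, by distributing the removal over the stacks of $C$. The paper's proof relies on this surjectivity only implicitly when it asserts that the two sets of Grundy values are identical, so your version fills that in.
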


 Note that Lemma~\ref{lem:merge red} corresponds to Corollary 6.10 of~\cite{ES1996}, for which the proof was left to the reader. Therefore, we will give the proof here. 

 \begin{proof} We proceed by induction on $s = \sum_i p_i=\sum_j\tilde{p}_j$. The base case $s=0$ implies that both $\p$ and $\tilde{\p}$ equal the terminal position $\bm{0}$ in their respective games,  thus $g(\p)=g(\tilde{\p})=0$. Now assume that $s=n$ and the induction hypothesis holds for $s<n$. Then 
 \begin{align*}
     g(\p)=\mex(\{g(\p') \mid \p \rightarrow \p'\})=\mex(\{g(\tilde{\p}') \mid \tilde{\p} \rightarrow \tilde{\p}'\})=g(\tilde{\p})
 \end{align*}
 because for every move from $\p$ to $\p'$ there is a corresponding (legal) move from $\tilde{\p}$ to $\tilde{\p}'$, the associated reduced positions, and the Grundy values of the options are the same by induction hypothesis since at least one token was removed in the move.  Note that the options for $\p$ might be more numerous, but all of them have a corresponding option of $\tilde{\p}$ to which they map. Thus, the set of Grundy values considered in the mex functions are identical. 
 \end{proof}

We now give examples of the two types of reductions.  Note that a zero reduction may in turn lead to a merge reduction. For ease of readability, we will use the notation $G\defeq \A$ as a shorthand for ``$G$ is the game \SN{n}{\A} with allowed moves $\A$'' and $G\cong \tilde{G}$ if the games $G$ and $\tilde{G}$ are isomorphic, that is, they have the same move sets. Zero reduction will play an important role in considering subspaces of positions that have a specific number and relative position of zeros.

\begin{example} \label{ex:merge red} The following games illustrate merge and zero reductions.
\begin{enumerate}    
     \item \label{it:CN62sub} The game $G_1  \defeq \{ \{a,b \},\{b,c \},\{c,d \},\{d,e \},\{e,f \}, \{f,a \}\}\cong $ \CN{6}{2}, when played on the subspace of positions with  $d=f=0$, reduces (via zero reduction) to the game $\tilde{G}_1 \defeq \{ \{a,b \},\{b,c \},\{e\}\}\cong \{ \{a,b \},\{b,c \},\{d\}\}$. No further reduction is possible.
    \item Let $G_2 \defeq \{ \{a,d \},\{a,b,c \},\{b,c,d \}\} $. Notice that in each move set, the stacks $b$ and $c$ either both appear or are both absent,  so we can perform a merge reduction to obtain  $\tilde{G}^{\{b,c\}}_x \defeq \{ \{a,d \},\{a,x \},\{x,d \}\} \cong$  \CN{3}{2}. 
    \item Let $G_3 \defeq \{ \{a,b,c\},\{b,c,d\},\{c,d,e\},\{d,e,f\},\{e,f,a\},\{f,a,b\}\} \cong$ \CN{6}{3}. The zero-reduction when $a=b=0$ leads to
    $\tilde{G}_3 \defeq \{ \{c,d,e\},\{d,e,f\}\} \cong $ \PN{4}{3}. We can now perform a merge reduction to obtain the game $\tilde{G}^{\{d,e\}}_x=\{\{c,x\},\{x,f\}\}\cong $ \PN{3}{2}. 
\end{enumerate}
 \end{example}

In the proofs of the main results, we will use the \IRP~(see Section~\ref{sec:invar}) to find moves from \N-positions to \P-positions by reducing to a suitable sub-game and then identifying a winning move in the sub-game. As part of the process, we need to find the corresponding winning move in the original game. Example~\ref{ex:step3} shows how the mapping of the move $\hat{\bm{m}}$ in the merged game to a move $\bm{m}$ in the original game can be achieved for a particular move and merge position, using the game $G_2$  of Example~\ref{ex:merge red}. We will see the full \IRP~in action in the proof of Theorem~\ref{thm:H}.

\begin{example} \label{ex:step3}  Let  $G_2\defeq \{ \{a,d \},\{a,b,c \},\{b,c,d \}\}$ and $\p=(2, 3, 5, 4)$. This game can be reduced to $\tilde{G}^{\{b,c\}}_x \defeq \{ \{a,d \},\{a,x \},\{x,d \}\} \cong$  \CN{3}{2}. The associated reduced position is $\hat{\p}=(p_1,p_2+p_3,p_4)=(2,8,4)\notin \P_{3,2}$ (see Theorem~\ref{thm:sumCN}).  The winning move in \CN{3}{2} is to a position in which all stack heights are equal, that is, to $\hat{\p}'=(2,2,2)$ using the move $\hat{\bm{m}}=(0,6,2)$. Therefore, $m_{x}=6$, to be removed  across stacks $p_2=3$ and $p_3=5$. 
We can remove either one, two or three tokens from stack $p_2$ and then the remaining tokens from stack $p_3$, resulting in moves  $\bm{m}_1=(0,1,5,2)$, $\bm{m}_2=(0,2,4,2)$, and $\bm{m}_3=(0,3,3,2)$ in game $G_2$. In either case, the move is to $\p'$   with $p_1'=p_2'+p_3'=p_4'$. Note that because $G_2$ is equivalent to \CN{3}{2}, we can immediately describe the set of \P-positions as $\P_G=\{\p \mid p_1=p_2+p_3=p_4\}$.
\end{example}

Even more useful are reductions that apply to families of games. We give several examples of zero reductions, merge reductions, and the combination of the two.

\begin{example} \label{ex:red_fam} The following illustrates merge and zero reductions for families of games. 
\begin{enumerate}
 \item The game \CN{n}{k}, when played on the subspace of positions with $l \geq k-1$ adjacent zero stacks is equivalent, after applying zero reduction, to the game \PN{n-l}{k}.
  
 \item For any $n \geq 4$, we have that \PN{n}{n-1} $\cong$ \PN{3}{2}, because  \PN{n}{n-1}~$\defeq\{\{1,\ldots, n-1\},\{2,\ldots, n\}\}$, so we can merge-reduce  the set $C=\{2,\ldots,n-1 \}$ to a new stack $i^*$, resulting in the game $G^C_{i^*}\defeq\{\{1,i^*\},\{i^*, n\}\} \cong$ \PN{3}{2}.
 
 \item  The game  \CN{2\ell+1}{\ell+1}, when played on the subspace of positions with  $\ell-1$ consecutive zeros, that is, $\p=(x_1,0,\ldots,0,x_{\ell+1},x_{\ell+2},\ldots,x_{2\ell+1})$,   
 reduces (via zero reduction) to the game $\tilde{G}\defeq \{\{1,\ell+1\},\{\ell+1,\ell+2,\ldots,2\ell+1\},\{\ell+2,\ldots,2\ell+1,1\}\}$. Applying merge reduction to $\tilde{G}$ by combining vertices $\ell+2,\ldots,2\ell+1$  gives the game $\hat{G} \defeq \{\{1,\ell+1\},\{\ell+1,i^*\},\{i^*,1\}\} \cong$ \CN{3}{2}. Thus the \P-positions for this subspace are those for which $x_1=x_{\ell+1}=\sum_{i=1}^{\ell} x_{(\ell+1)+i}$ and $x_2=\cdots = x_{\ell}=0.$ (This result corresponds to Lemma 5 in~\cite{DHV21}, where it was proved in a slightly different manner.)

 \item \label{it:CNsubgame}The game \CN{c\, k}{k} is a sub-game  of \CN{c(k+1)}{k+1}. Specifically, the game \CN{c(k+1)}{k+1} played on the subspace of positions that have $c$ equally spaced zeros at distance $k+1$ apart can be zero-reduced to a game with $c \cdot k $ stacks, where play is on $k$ stacks. The reduction takes away $c$ zero stacks, reducing the total number of stacks to $c\cdot k$. Since each move set of $G$ contains exactly one zero, the maximal number of stacks one can play on is reduced by one to $k$.
 \end{enumerate}
\end{example}

In the proofs of our main results, we divide the space of \P-positions into subspaces that correspond to specific sub-games. For easy reference, we collect results on solved \ruleset{CircularNim} games (see~\cite{DuHe09,DHV21}) that will be used in examples and proofs. Note that for circular Nim, positions are only determined up to rotation and reflection. 

\begin{theorem}\label{thm:sumCN} 
    Let $\P_{n,k}$ denote the $\P$-positions of the game \CN{n}{k}. Then we have the following results:
    \begin{enumerate}
        \item $\P_{3,2}=\{(a,a,a)\mid a \ge 0\}$.
        \item $\P_{4,2}=\{(a,b,a,b)\mid a, b \ge 0\}$.
        \item $\P_{5,2}=\{(M, m, c, d, m) \mid M + m = c + d \text{ and } M = \max(\p)\}$.
        \item $\P_{5,3}=\{(0,b,c,d,b)\mid b=c+d\}$.
        \item $\P_{6,3}=\{(a,b,c,d,e,f)\mid a+b = d+e \text{ and } b+c = e+f\}$.
        \item $\P_{7,4}=S_1 \cup S_{2} \cup S_3 \cup S_{4}$, where
\begin{itemize}
    \item[-] $S_1 = \{\p \mid a = b = 0, \> c = g>0,\> d + e + f = c\}.$
    \item[-] $S_2 = \{\p \mid \p=(a,a,a,a,a,a,a)\}.$
    \item[-] $S_3 = \{\p \mid a = b, \>c = g, \>d = f,  \> a + c = d + e, \> \> 0< a < e\},$ and
    \item[-] $S_4 = \{\p \mid a = f, \> b + c = d + e = g + a, \>a < \min\{b,\>e\},a < \max\{c,d\}\}.$
\end{itemize}
    \end{enumerate}
\end{theorem}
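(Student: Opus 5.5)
Since the theorem only collects previously published results (items 1--5 from~\cite{DuHe09} and item 6 from~\cite{DHV21}), the honest proof is a citation. Still, I would verify each item with the standard characterization of $\P_G$: a candidate set $S$ equals $\P_G$ if and only if (i) $\bm{0}\in S$ and no legal move leads from a position of $S$ to another position of $S$, and (ii) every position outside $S$ has a move into $S$. I would treat the items in order of increasing complexity, using the zero and merge reductions of Example~\ref{ex:red_fam} and Lemma~\ref{lem:merge red} wherever a subspace reduces to a smaller solved game.

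For item 1, a move in \CN{3}{2} touches two stacks and leaves the third unchanged. It therefore destroys equality, which gives (i). From a non-constant position, reducing the two larger stacks to the minimum gives (ii). For item 2, every move set $\{i,i+1\}$ contains exactly one stack from each opposite pair $\{1,3\}$ and $\{2,4\}$, and the partner stacks are untouched. Hence any decrease breaks one of the equalities $a=c$, $b=d$. For (ii), reflect so that $a>c$, and pick whichever neighbour of stack $1$ is the larger member of its opposite pair. The move then sets $a\to c$ and lowers that neighbour to its partner.

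For item 4, I would note that stacks $2,\dots,5$ of a \P-position with $a=0$ form \PN{4}{3}. This reduces via Example~\ref{ex:red_fam} to \PN{3}{2}$\,\cong\,$\CN{3}{2} with the loose end, which gives $b=c+d$. Item 4 then needs a short argument that $\min(\p)>0$ always lets one move to such a position. For item 5, the two linear invariants $a+b-d-e$ and $b+c-e+f$ (suitably signed) are changed by every move set of three consecutive stacks, because each such set meets the pairs unequally. Finding a move that zeroes both invariants is a case analysis on signs.

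Item 3 and, above all, item 6 are the main obstacles. Their \P-sets are unions of non-linear pieces with inequality side conditions, so step (ii) requires an extensive case split over which condition fails. For these I would rely on the cited proofs in~\cite{DuHe09,DHV21}. If I had to reprove item 6, I would first handle the subspaces with zeros. There $S_1$ arises from the zero and merge reduction of Example~\ref{ex:red_fam}(3) with $\ell=3$. I would then treat the positive part by splitting on which of the linear relations in $S_3$ and $S_4$ holds, checking (i) across the four pieces pairwise. This bookkeeping is exactly what the \IRP{} of Section~\ref{sec:invar} is designed to streamline.
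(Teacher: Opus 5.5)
This matches the paper, which gives no proof of Theorem~\ref{thm:sumCN} and simply collects these results from \cite{DuHe09,DHV21}, so resting on the citation is the right call. One slip in your optional sketch of item~4: with $p_1=0$ the move set $\{5,1,2\}$ restricts to $\{5,2\}$, so the zero-reduced game is $\{\{b,c,d\},\{c,d,e\},\{e,b\}\}$ rather than \PN{4}{3} (whose \P-positions $(b,0,0,b)$ would force $c=d=0$), and merging $c,d$ gives \CN{3}{2} directly (this is Example~\ref{ex:red_fam}(3) with $\ell=2$), which yields $b=c+d=e$; note also that \PN{3}{2} is not isomorphic to \CN{3}{2}.
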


\vspace{0.2in}

If the process of reduction leads to a sub-game whose solution is not known, then we cannot solve the larger game without first solving the sub-game. Specifically, Example~\ref{ex:red_fam}(\ref{it:CNsubgame}) with $c=3$ and $k=2$ implies that solving \CN{9}{3} is not possible without making progress on the unsolved game \CN{6}{2} first. By Example~\ref{ex:merge red}(\ref{it:CN62sub}), solving \CN{6}{2} in turn requires (among other things) solution of the game $G \defeq \{ \{a,b \},\{b,c \},\{d\}\}$, which itself is an open problem.

\section {Invariance and \P-positions} \label{sec:invar}

Invariant vectors will play a crucial role in solving the games in the subsequent sections. They are useful in finding and making conjectures about the patterns of the \P-positions,  for proving those patterns, and for efficient game play.  Let's get started with the definition.

\begin{definition} \label{def:inv} A vector is called a \emph{zero-one} vector if all its entries are either zeros or ones. A zero-one vector ${\bm z}$ is called \emph{invariant for a set $S$} of game positions if for all $\p \in S$ and any integer $c$ for which $\p+c\cdot \bm{z} \ge \bm{0}$, $\p\in S$ if and only if  $\p+c\cdot \bm{z} \in S$. 
\end{definition}

Because $\bm{0} \in \P$, the definition of invariant vectors immediately implies that the \P-positions of a game must contain any linear combination of the invariant vectors that result in non-negative stack heights.\footnote{When we write linear combinations of invariant vectors, we always assume that the coefficients are non-negative.} This ties the invariant vectors intimately to the \P-positions. How can we use this connection?

\subsection{Finding the \P-position patterns using invariant vectors} \label{subsec:pattern}

Finding the pattern of the \P-positions of a game often involves creating a reasonably large list of positions that have been classified as either \N- or \P-positions, and then trying to guess a pattern for the \P-positions. If there are groups of \P-positions that differ by multiples of a vector, then we have found a candidate for an invariant vector. To confirm the invariance, we check whether adding this vector to any of the positions (and their equivalent versions, such as rotations and reflections in \ruleset{CircularNim}) results in a position of the same class. If so, then the candidate vectors is indeed an invariant vector for the list of positions at hand. After checking any such candidate vectors, we can then check whether there is an overall pattern in the linear combinations of the invariant vectors. Here is an example of how invariant vectors can produce a conjecture for \P-positions. 

\begin{example} \label{ex:CN63lc} Assume that when analyzing the game \CN{6}{3}, you have identified the following as invariant vectors: $\bm{z}_1=(1,0,0,1,0,0)$, $\bm{z}_2=(0,1,0,0,1,0)$, $\bm{z}_3=(0,0,1,0,0,1)$,  $\bm{z}_4=(1,0,1,0,1,0)$, and $\bm{z}_5=(0,1,0,1,0,1)$. The set of \P-positions must contain the linear combinations of these. To highlight the structure that emerges, we will select convenient coefficient names, and  ``grow" the linear combinations step-by-step. First we add multiples of $\z_4$ and $\z_5$ to obtain $\p=x\cdot \z_4+y\cdot \z_5=(x,y,x,y,x,y).$
Then we add multiples of the three other vectors to obtain
$$\p=(x,y,x,y,x,y)+c_1\z_1+c_2\z_2+c_3\z_3=(x+c_1,y+c_2,x+c_3,y+c_1,x+c_2,y+c_3).$$ 
If we label the stacks in $\p$ as $(a,b,c,d,e,f)$, then the above pattern can equivalently be expressed as $\{\p \mid a+b=d+e,\, b+c=e+f,\,  c+d=f+a\}$, because each pair of stacks has an $x$ and a $y$ component, and stacks that are diagonally across have the same value $c_i$ added on. For \CN{6}{3}, the linear combinations of the invariant vectors actually equal the set of \P-positions, see Theorem~\ref{thm:sumCN}.
\end{example}

Before we tackle the question under what conditions the linear combinations of the invariant vectors equal \P, we will look at how properties of \P~impact the (non-)existence of invariant vectors and vice versa. 

\begin{remark} \label{rem:inv-spec} \hfill
    \begin{enumerate}
           \item In the \ruleset{PathNim} games, the only invariant vector is $\z=(1,0,\ldots,0,1)$ due to the string of consecutive zeros of the \P-positions. For the still-unsolved cases of \ruleset{PathNim} (those where play is on fewer than half the stacks), we know that the invariant vectors need to be symmetric with regard to reflection due to the underlying structure of \ruleset{PathNim}.
        
        \item In any \ruleset{CircularNim} game where the \P-positions have a designated zero, such as in the game \CN{5}{3}, there is no invariant vector. If there were a non-zero vector $\z$, then a $1$ entry in $\z$  would destroy the designated zero in one of the rotations of $\p+\z$.

        \item 
        In any \ruleset{CircularNim} game where the \P-positions are described via a structural relationship between locations of maxima or minima and the other stacks, the only potential invariant vector is $\z=(1,\dots,1)=\bm{1}$. For example, the \P-positions of \CN{5}{2} are given by $\P = \{(M, m, c, d, m) \mid M + m = c + d \text{ and } M = \max(\p)\}$. If the vector $\z$ would have a coordinate that is $0$, then we could find a position $\p \in P$ such that $\p+\z \notin \P$ for at least one of the rotations or reflections of $\p$. 
        
        \item If $\z=\bm{1}$, then any position that has equal stack heights is a \P-position. Furthermore, any conditions involving equality of sums of stacks must have the same number of stacks on both sides of the equation.  
        
        \item The converse, namely that  $(n,\ldots,n) \in \P_G$ for all $n \in \mathbb{N}_0$ implies that $\bm{1}$ is an invariant vector, is not true. A counter-example is the game \CN{7}{4}, where $\p=(0,0,1,0,0,1,1)\in S_1 \subset \P_{7,4}$, while $\p+\bm{1}=(1,1,2,1,1,2,2) \notin \P_{7,4}$ because it does not satisfy the conditions of any of the sets $S_i$ (see Theorem~\ref{thm:sumCN}). 

        \item \CN{n}{k} with $n \geq 3$ and $k \leq n/3$ has \Nim~on three stacks as a sub-game, because in any position with exactly three non-zero stacks at maximal distance from each other, play can be on exactly one of the non-zero stacks. Since the \P-positions of \Nim~on three stacks are defined by the Nim-sum,  no invariant vectors can exist for these games.
    \end{enumerate}
\end{remark}

While many questions regarding the interplay between \P-positions and the linear combinations of the invariant vectors remain to be explored, here is a first result on conditions for which the \P-positions consist exactly of the linear combinations of the invariant vectors. Example~\ref{ex:inv=P} illustrates the use of Proposition~\ref{prop:invPpos}. 

\begin{proposition} \label{prop:invPpos} Let $\p$ be a position of a \ruleset{SetNim} game $G$ with move set $\A$, $I$ be the set of invariant vectors $\z_i$ of the \P-positions of the game, and $\tilde{\z}=\sum_{i\in I} c_i\z_i$ where $c_i \geq 0$.  If for any $\p \in G$, we can find $\tilde{\p}=\p-\tilde{\z}$  such that the non-zero stacks of $\tilde{\p}$ are contained in some set $A \in \A$, then the set of \P-positions equals the set of linear combinations of the invariant vectors. 
\end{proposition}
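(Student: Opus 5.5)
We show two inclusions. Let $L$ be the set of non-negative integer linear combinations of the invariant vectors $\z_i$, $i\in I$. We want $\P_G = L$.

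\emph{$L \subseteq \P_G$.} Since $\bm{0}\in\P_G$, we build up any element of $L$ one vector at a time, adding $c_i\z_i$ in turn. Each intermediate vector is non-negative, and the definition of invariance (Definition~\ref{def:inv}, applied with positive multiples) keeps us inside $\P_G$ at every step. This is exactly the observation made right after Definition~\ref{def:inv}.

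\emph{$\P_G \subseteq L$.} Let $\p\in\P_G$. By hypothesis there is $\tilde{\z}=\sum_{i\in I} c_i\z_i$ with $\tilde{\p}=\p-\tilde{\z}\ge \bm{0}$, and the support of $\tilde{\p}$ is contained in some $A\in\A$.
\begin{enumerate}
\item We first show $\tilde{\p}\in\P_G$. Remove the vectors $c_i\z_i$ from $\p$ one at a time. Each intermediate position is coordinatewise at least $\tilde{\p}$, hence non-negative, so invariance (now with negative multiples) keeps each step in $\P_G$.
\item Next we show $\tilde{\p}=\bm{0}$. If $\tilde{\p}\neq\bm{0}$, a single move on the set $A$ may take all tokens from every non-zero stack of $\tilde{\p}$, since all of them lie in $A$. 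This is a legal move from $\tilde{\p}$ to $\bm{0}\in\P_G$. But $\tilde{\p}$ is itself a \P-position, and there is no move between \P-positions, a contradiction.
\item Therefore $\p=\tilde{\z}\in L$.
\end{enumerate}

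\emph{Main obstacle.} The key subtlety is the non-negativity bookkeeping in step 1. Invariance is only defined when $\p+c\cdot\z\ge\bm{0}$, so we must order the subtractions so that every intermediate position stays non-negative. This holds because all $c_i\ge 0$ and all $\z_i\ge\bm{0}$: partial subtractions are bounded below by the final $\tilde{\p}\ge\bm{0}$. We also need that $\tilde{\p}$ is a legitimate game position, which is exactly this non-negativity.

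The hypothesis should be read as applying to every position $\p$, in particular to every \P-position; that is all the argument uses.
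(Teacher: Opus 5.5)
Your proof is correct and follows the paper's argument. Both proofs first get the linear combinations into $\P$ by invariance starting from $\bm{0}$. Both then use that the support of $\tilde{\p}$ lies in a single move set, so $\tilde{\p}$ can move directly to $\bm{0}$. You argue the contrapositive direction (a \P-position reduces to a \P-position, which must therefore be $\bm{0}$), whereas the paper shows that a non-combination reduces to an \N-position; you also make explicit the non-negativity of the intermediate positions, which the paper leaves implicit.
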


\begin{proof} From the definition of invariance, we obtain that any vector $\tilde{\z}$ belongs to \P~since $\bm{0}\in\P$. Now assume that  $\p $ is not a linear combination of the invariant vectors. Then $\p-\tilde{\z} \neq \bm{0}$ for any $\tilde{\z}$. By the assumption on the non-zero stacks of $\tilde{\p}$ there is a move $\bm{m}$ from $\tilde{\p}=\p-\tilde{\z}$ to $\bm{0}$, so  $\tilde{\p} \in \N$, which implies that  $\p \in \N$ by the definition of invariance. Thus $\p \in \P$ if and only if $\p$ is a linear combination of the invariant vectors.
\end{proof}

 \begin{example} \label{ex:inv=P} Let $\p$ be a position in game \CN{6}{3}. Using the invariant vectors identified in Example~\ref{ex:CN63lc},  namely $\bm{z}_1=(1,0,0,1,0,0)$, $\bm{z}_2=(0,1,0,0,1,0)$,  $\bm{z}_3=(0,0,1,0,0,1)$,   $\bm{z}_4=(1,0,1,0,1,0)$, and $\bm{z}_5=(0,1,0,1,0,1)$, we show that we can create three consecutive zeros in the reduced position. 
 Let $c_i=\min(p_i,p_{i+3})$ for $i=1,2,3$, that is, $c_i$ is the minimum of the stack heights of diagonally opposite stacks. Then 
$\hat{\p} = \p - c_1 \bm{z}_1-c_2 \bm{z}_2-c_3\bm{z}_3$ has three zeros.  If, in position $\p$, two of the minima of pairs of diagonally opposite stacks are adjacent, then all three minima have to be adjacent, hence $\hat{\p}$ has three adjacent zeros. Otherwise,  $\hat{\p}$ has alternating zeros. We can create at least one additional zero by using the remaining two invariant vectors to reduce to $\tilde{\p}$ where 
 $$\tilde{\p} =\hat{\p}-\min(\hat{p}_1,\hat{p}_3,\hat{p}_5)\cdot \bm{z}_4-\min(\hat{p}_2,\hat{p}_4,\hat{p}_6)\cdot \bm{z}_5,$$
 which now has three consecutive zeros as claimed. As one can always play on the three non-zero stacks, Proposition~\ref{prop:invPpos} applies, and $\P_{6,3}=\{\p \mid \p=\sum_{i=1}^5 c_i \z_i\}$.
   \end{example}

\subsection{Proving the \P-position pattern using invariant vectors}

Invariant vectors can be used to greatly simplify the usually more difficult part of the proof, namely that there is a move from an \N-position to a \P-position, and can also be used to show that for a subset of \P-positions, there is no move to another \P-position. The usefulness of invariant vectors in the proofs of the pattern of the \P-positions stems from the following lemma. 

\begin{lemma}[Invariance Move]  \label{lem:inv}  Let $\p \notin \P$ and let $I=\{{\bm z}_1,\ldots, {\bm z}_l\} $ be a set of invariant zero-one vectors of \P~with $\tilde{\bm{z}}=\sum_{i=1}^l c_i\cdot \bm{z}_i$, where $c_i \geq 0$.  To find a move from  $\p$ to a position $\p' \in \P$ it suffices to find a (legal) move $\bm{m}$ from $\tilde{\p}=\p-\tilde{\bm{z}}\geq \bm{0}$ to a position $\tilde{\p}' \in \P$. Then $\p'=\tilde{\p}'+\tilde{\bm{z}}$ is in $\P$, and we can reach $\p'$ from $\p$ using the  move $\bm{m}$.
\end{lemma}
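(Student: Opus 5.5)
The proof splits into two independent checks. First, $\p'$ lies in $\P$. Second, $\bm{m}$ is a legal move from $\p$ to $\p'$. Both use only Definition~\ref{def:inv} and the definition of a move in \SN{n}{\A}.

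\textbf{Membership.} I would first show that invariance under a single vector extends to a non-negative combination $\tilde{\z}=\sum_i c_i\z_i$. Adding the vectors one at a time gives the chain
\[
\tilde{\p}',\quad \tilde{\p}'+c_1\z_1,\quad \tilde{\p}'+c_1\z_1+c_2\z_2,\quad \ldots,\quad \tilde{\p}'+\tilde{\z}=\p'.
\]
Since all $c_i\ge 0$ and $\tilde{\p}'\ge\bm{0}$, every intermediate position is non-negative. Hence each step is an instance of Definition~\ref{def:inv} with $c=c_i$, applied to the invariant vector $\z_i$ of $\P$. Induction along the chain then gives $\tilde{\p}'\in\P \iff \p'\in\P$, so $\p'\in\P$. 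The same argument run with negative coefficients shows, as a side remark, that $\tilde{\p}\notin\P$, because $\p\notin\P$ and $\p=\tilde{\p}+\tilde{\z}$.

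\textbf{Legality.} Write $\bm{m}=\tilde{\p}-\tilde{\p}'$. Because $\bm{m}$ is a legal move from $\tilde{\p}$, we have $\bm{m}\ge\bm{0}$ and $\bm{m}\ne\bm{0}$. Its support also lies in some $A\in\A$, and $m_i\le\tilde{p}_i$ for every $i$. Now
\[
\p-\p'=(\tilde{\p}+\tilde{\z})-(\tilde{\p}'+\tilde{\z})=\bm{m},
\]
so the same token removals take $\p$ to $\p'$. These removals are on the same set $A$, at least one token is removed, and $m_i\le\tilde{p}_i\le\tilde{p}_i+\tilde{z}_i=p_i$. Therefore $\p\rightarrow\p'$ is a legal move.

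\textbf{Main obstacle.} The only point needing care is the non-negativity requirement in Definition~\ref{def:inv}. Invariance is stated only for shifts $\p+c\z$ that stay non-negative, and for a single vector rather than a combination. This is why I would add the vectors one at a time with non-negative coefficients, starting from the non-negative position $\tilde{\p}'$: every intermediate position is then automatically admissible. The rest is bookkeeping.
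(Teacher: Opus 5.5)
Your proof is correct and follows the same route as the paper: iterate the single-vector invariance to get $\p'=\tilde{\p}'+\tilde{\z}\in\P$ (and, as a side remark, $\tilde{\p}\notin\P$), then observe that $\p-\p'=\bm{m}$. Your checks that every intermediate position stays non-negative and that $m_i\le\tilde{p}_i\le p_i$ fill in details the paper's proof leaves implicit.
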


\begin{proof} Note that by iteratively evoking invariance of the $\z_i$, we obtain that if $\p \notin \P$, then $\tilde{\p}=\p-\tilde{\z} \notin \P$ also. Now assume we can find a legal move $\bm{m}$ from $\tilde{\p}$ to $\tilde{\p}' \in \P$. By once again iteratively invoking the invariance of the $\z_i$, we know that $\p':=\tilde{\p}'+ \tilde{\z}\in \P$. Since $\p'=(\p-\tilde{\z})-\bm{m}+\tilde{\z}=\p-\bm{m}$, we have found a legal move, namely $\bm{m}$, from $\p\notin \P$ to $\p' \in \P$. Figure~\ref{fig:inv} visualizes this process.
    \end{proof}

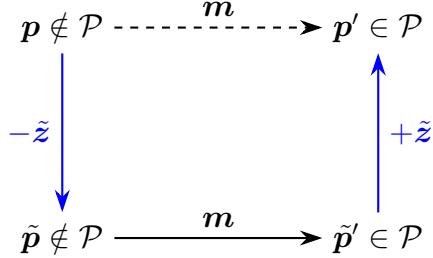
\begin{figure}[h!]
    \centering    
\begin{tikzpicture}[scale=0.7]

\node (A) at (-3,2) {$\p\notin \P$};
\node (B) at (3,2) {$\p' \in \P$};
\node (C) at (-3,-2) {$\tilde{\p} \notin \P$};
\node (D) at (3,-2) {$\tilde{\p}' \in \P$};

\draw[-{Stealth[length=3mm,width=2mm]}, thick, dashed] (A) -- (B) node[midway,above] {$\bm{m}$};
\draw[-{Stealth[length=3mm,width=2mm]}, thick] (C) -- (D) node[midway,above] {$\bm{m}$};

\draw[-{Stealth[length=3mm,width=2mm]}, thick, blue] (A) -- (C) node[midway,left] {$-\tilde{\z}$};
\draw[-{Stealth[length=3mm,width=2mm]}, thick, blue] (D) -- (B) node[midway,right] {$+\tilde{\z}$};

\end{tikzpicture}
    \caption{Using invariant vectors to find an option $\p' \in \P$ of $\p \notin \P$ that can be reached via legal move $\bm{m}$.}
    \label{fig:inv}
\end{figure}

\begin{example} \label{ex:inv} We illustrate the application of the above lemma using the game \CN{5}{2} and the position \(\p = (3,8,5,9,6) \notin \P_{5,2}.\) By Remark~\ref{rem:inv-spec}(3),  $\z = \bm{1}$ is an invariant vector for $\P_{5,2}$. Therefore, \(\tilde{\p} = \p-3\z=(0,5,2,6,3) \notin \P_{5,2}.\) One possible move to a \P-position is $\bm{m} = (0,0,2,4,0)$, which yields \(\tilde{\p}'=\tilde{\p}-\bm{m} = (0,5,0,2,3) \in \P_{5,2}.\) The same move $\bm{m}$  can be used  
 to move from $\p$ to $\p' = (3,8,3,5,6)$; in fact, it can be used for any position of the form $\p+c \cdot\z$.
\end{example}

We now have gathered all the tools necessary to establish how we use invariant vectors to prove the patterns of the \P-positions. \\

\subsubsection{Existence of a move from $\p \notin \P$ to $\p' \in \P$}

 Assume that the game $G$ cannot be reduced via a merge reduction and that we have identified a set of invariant vectors of the proposed set $\P_G$. (Invariance for a set $S$ of game positions can be verified by checking that the conditions that determine the set $S$ are still true for $\p+c\cdot \bm{z}$, which is easy to do when the conditions consist of a set of equations.) 

We first reduce the position $\p$ to a position $\tilde{\p}=\p-\sum_{i=1}^l {c_i\z_i}$ with carefully chosen coefficients $c_i$. Typically, the invariance reduction of the position creates a number of different cases to be considered, based on the number and relative location of the zeros of $\tilde{\p}$. For each sub-case, we then reduce the game $G$ using zero (and possibly) merge reductions to arrive at a sub-game $G_j$ that (hopefully) is already solved.  Since $\tilde{\p} \notin\P_{G_j}$, a move to a \P-position of the reduced game exists. We can use this move to arrive at a position $\p'\in \P_G$, for each of the sub-cases.  Figure~\ref{fig:IRP} visualizes the process.  

Before we give the details of this process and argue that it produces the desired result, we introduce some useful notation. Let ${I}_{\z}=\{i|z_i=1\}$ be the set of indices at which a zero-one vector $\z$ has ones (and hence, the indices of the stacks affected by  $\z$). We refer to any such index as a {\em 1-index of $\z$} and define the \emph{indicator minimum of $\z$ with respect to $\p$}, $\im(\z;\p)=\min_{I_{\z}}(p_i)$, as the minimal stack height among the stacks affected by $\z$. \\

\noindent {\bf \IRP}\\
Let $\{\z_1,\ldots,\z_l\}$ be a set of invariant vectors of the proposed set $\P_G$ and assume that $\p \notin \P_G$.
\begin{enumerate}
    \item (Invariance reduction) Set $\p_{(0)}=\p$ and iteratively compute  $c_i=\im(\z_i;\p_{(i-1)})$ and $\p_{(i)}=\p_{(i-1)}-c_i \z_i$ for $i=1,\ldots,l$. At each iteration, either $\p_{(i-1)}$ already has a zero at one of the 1-indices of $\z_i$ (in which case $c_i=0$) or a zero is created at a new location in $\p_{(i)}$. Applying this process, $\tilde{\p}:=\p_{(l)}$ has at least one zero. When performing the reduction manually, then the process can be sped up in two ways (see Example~\ref{ex:inv=P}): 
    \begin{itemize}
        \item[-] Disregard any invariant vectors $\z_i$ for which $\p$ has a zero at one of the  1-indices of $\z_i$, since that would result in $c_i=0$.
        \item[-] Process any vectors $\z_i$ that have no common 1-indices in a single iteration since the reductions in stack heights from their individual iteration have no affect on the stacks reduced by the other invariant vectors. 
    \end{itemize}
If Proposition~\ref{prop:invPpos} does not apply, then we look at each of the subspaces of positions induced by the number and relative location of the zeros. Each subspace is closed with regard to moves because the locations of the zeros are fixed, and therefore each subspace (case) can be considered separately. Note that the reduced position $\tilde{\p}$ is still a position in the original game $G$, and invariance implies that $\tilde{\p} \notin \P_G$. We now reduce the game in each of the subspaces.

\item (Subspace game reduction) For each of the subspaces that arise from the invariance reduction, we perform a zero reduction and possibly a merge-reduction to arrive at an isomorphic sub-game $\hat{G}$.   By Remark~\ref{rem:zero red} and Lemma~\ref{lem:merge red} we know that the associated position $\hat{\p}$ is an \N-position of $\hat{G}$, therefore a move $\hat{\bm{m}}$ from  $\hat{\p}$ to $\hat{\p}' \in \P_{\hat{G}}$ exists. 

\item (Winning move)  If the solution of $\hat{G}$ is known, then we can identify the move $\hat{\bm{m}}$ and,  by unraveling the game reduction steps, the corresponding move $\bm{m}$ in game $G$. If only a zero reduction was performed, then the move $\bm{m}$ is defined as  $m_i=0$ if $\tilde{p}_i=0$ and $m_i=\hat{m}_i$ if $\tilde{p}_i>0$. If the zero reduction was followed by a merge reduction, then the move $\bm{m}$  is defined as follows:
    $$m_i=\left\{\begin{array}{cl}
    0 & \text{if } \tilde{p}_i=0\\
    \hat{m}_i & \text{if }  \tilde{p}_i>0 \text{ and }  i \neq v^*\\
     \end{array} \right.$$
    where $v^*$ denotes the index of the merged stack. For the stacks that were merged, ``distribute" the total amount of tokens $\hat{m}_{v^*}$ to be removed across the stacks that were combined into vertex $v^*$.  The resulting move is not unique, unless $\hat{m}_{v^*}$ equals the sum of tokens of the merged stacks. A simple process to distribute the tokens is to remove as many tokens as possible from the smallest merged stack. If there are still tokens to be removed, remove as many as possible from the next smallest stack, and so on, until all $\hat{m}_{v^*}$ tokens have been removed. 
    \item (Finding the option) Since the sub-games are equivalent to the game $G$, the move $\bm{m}$ leads from $\tilde{\p}\notin \P_G$ to a position $\tilde{\p}'\in \P_G$.  By Lemma~\ref{lem:inv}, $\bm{m}$ is also a move from $\p\notin \P_G$ to $\p' \in \P_G$. Thus $\p'=\p-\bm{m}$.
\end{enumerate}

\begin{figure}[h!]
    \centering
  
\begin{tikzpicture}[font=\small]
\node at (0,6) (p1) {$\p \notin \P_G$};
\node at (0,4) (p2) {$\tilde{\p} \notin \P_G$};
\node at (2,2) (p4) {};

\draw[thick,green!60!black,-{Latex[length=4mm,width=3mm]}] (p1) -- node[left] {$-\tilde{\z}$} (p2);

\draw[thick,magenta,-{Latex[length=4mm,width=3mm]}] (p2) -- node[pos=0.55,below, xshift=-7.5pt] {$\tiny G_3$} (p4);

\draw[thick,magenta,-{Latex[length=4mm,width=3mm]}] 
    (p2) -- node[pos=0.45,below,sloped,rotate=90,xshift=-10pt] { $\tiny G_2$} (0,2);

\draw[]  (-1,2)-- (-1,-2)--(-7,-2)--(-7,2)--(-1,2);    
\draw[]  (-0.5,2)-- (0.5,2)--(0.5,-2)--(-0.5,-2)--(-0.5,2);
\draw[]  (1.5,2)-- (2.5,2)--(2.5,-2)--(1.5,-2)--(1.5,2);

\draw[thick,magenta,-{Latex[length=4mm,width=3mm]}] (p2) -- node[pos=0.8,above,sloped] {$\tiny G_1$} (-2.5,2);
\node[magenta,align=left] at (-3.5,3.25) {Sub-game reduction\\(Remark 3.3,\\Lemma 3.5)};
\node[align=left] at (-2.5,5) {Invariance\\reduction\\(Lemma 4.2)};

\node at (3.25,0)() {\bf{.~~.~~.}};
\node[magenta] at (2.25,3) () {\bf{.~~.~~.}};
\node[blue, rotate=90] at (0,0) () {Sub-game $\cong G_2$};
\node[blue, rotate=90] at (2,0) () {Sub-game $\cong G_3$};
\node[green!60!black] at (-3.95,5) () {\boxed{1}};
\node[magenta] at (-5.5,3.25) () {\boxed{2}};

\node at (-6.25,1.4) (p5) {$\hat{\p} \notin \P_{G_1}$};
\node at (-1.75,1.4) (p6) {$\hat{\p}' \in \P_{G_1}$};
\draw[blue,thick,-{Latex[length=3.2mm,width=2mm]}] (p5) -- node[above] {$\hat{\bm{m}}$}  (p6);

\node at (-6.25,-1.3) (p7) {$\p \notin \P_{G}$};

\node at (-1.75,-1.3) (p9) {$\p' \in \P_{G}$};
\draw[blue,thick,-{Latex[length=3.2mm,width=2mm]}] (p7) -- node[above]{$\bm{m}$}  node[below] {\boxed{4}}(p9);

\draw[dashed,magenta,-{Latex[length=3mm,width=2mm]}] (-4,1.4) -- node[magenta,right]{map move to $G$} node[magenta,left,pos=0.5] {\boxed{3}}(-4,-0.9);
\end{tikzpicture}
    
    \caption{Visualization of the \IRP. Boxed numbers reference the steps of the \IRP.}
    \label{fig:IRP}
\end{figure}

In Examples~\ref{ex:irp} and~\ref{ex:irp2} of Section~\ref{sec:pathnim}, we will provide numerical examples on how to map the move $\hat{\bm{m}}$ of the reduced game back to a move $\bm{m}$ of the original game for the game $H$, which is introduced in that section. \\

As indicated in the description of the \IRP, each iteration creates a zero at one of the 1-indices of the currently applied invariant vector, not necessarily an additional one. If the order in which we apply the invariant vector is changed, then the corresponding reduced positions $\tilde{\p}$ may differ. However, they will belong to the same subspace of positions.  Table~\ref{tab:inv_var2} shows a few numerical examples of this phenomenon for the game $H$ (see Theorem~\ref{thm:H}), which has invariant vectors $\bm{z}_1=(1,1,0,1,0,1)$ and  $\bm{z}_2=(1,0,1,0,1,1)$. In each row of the table, we list the position $\p$, followed by the first iterate $\p_{(1)}$ and the final reduced position $\tilde{\p}$, for each of the two possible orders of applying the invariant vectors. The last column indicates the case in the proof of Theorem~\ref{thm:H} that $\tilde{\p}$ belongs to. In all instances, the two resulting reduced positions belong to the same subspace (case) determined by the location of their zero(s), even though their stack heights may differ and additional zeros may be present. For this game, reduced positions that differ fall into case 1 because one of two conditions holds: either one of the two smallest stacks of $\p$ is at location $a$  or the sum of the two smallest stacks of $\p$ exceeds  $a$.

\begingroup
\setlength{\tabcolsep}{8pt} 
\renewcommand{\arraystretch}{1.25} 
\begin{table}[h!]
    \centering
    \scalebox{0.90}{
    \begin{tabular}{|c||c|c!{\vrule width 1.2pt}c|c||c|} \hline
      & \multicolumn{2}{c!{\vrule width 1.2pt}}{$\z_1$ first, then $\z_2$} & \multicolumn{2}{c||}{$\z_2$ first, then $\z_1$} &      \\ \hline

       $\p$  & $\p_{(1)}$ & $\tilde{\p}$ &  $\p_{(1)}$ & $\tilde{\p}$ & Case \\\hline\hline

     $(5,2,7,8,9,6)$      &$(3,0,7,6,9,4)$  & $(\bm{0},\bm{0},4,6,6,1)$ & $(\bm{0},2,2,8,4,1)$ & no change & 1 \\\hline
     
  $(3,5,6,3,9,10)$ & $(\bm{0},2,6,\bm{0},9,7)$  & no change & $(\bm{0},5,3,3,6,7)$ & no change   &  1\\ \hline

    $(4, 3, 9, 12, 2, 8)$  &$(1, \bm{0}, 9, 9, 2, 5)$ & $(\bm{0}, \bm{0}, 8, 9, 1, 4)$ & $(2, 3, 7, 12, \bm{0}, 6)$ & $(\bm{0}, 1, 7, 10, \bm{0}, 4)$ & 1 \\ \hline\hline
    
    $(8,2,4,6,5,7)$      &$(6,0,4,4,5,5)$  & $(2,\bm{0},\bm{0},4,1,1)$ & $(4,2,0,6,1,3)$ & $(2,\bm{0},\bm{0},4,1,1)$& 2 \\\hline 

     $(5,4,2,2,3,7)$      &$(3,2,2,0,3,5)$  & $(1,2,\bm{0},\bm{0},1,3)$ & $(3,4,0,2,1,5)$ & $(1,2,\bm{0},\bm{0},1,3)$&  3\\\hline
     
   $(7,2,7,5,3,8)$   & $(5,0,7,3,3,6)$ & $(2,\bm{0},4,3,\bm{0},3)$ &  $(4,2,4,5,0,5)$& $(2,\bm{0},4,3,\bm{0},3)$& 4\\\hline

    \end{tabular}}
    \caption{Positions $\p$ of game $H$, with their respective invariance-reduced positions $\tilde{\p}$, arrived at by applying the invariant vectors in different order.  Zero entries of $\tilde{\p}$ are indicated in bold for easy reference to the case in the proof of Theorem~\ref{thm:H}, which is listed in the last column.}
    \label{tab:inv_var2}
\end{table}
\endgroup

\subsubsection{There is no 
move from $\p \in \P$ to $\p' \in \P$}

We can also use the invariance reduction process for some parts of the proof that there is no move from a  \P-position to another \P-position. As indicated before, the invariance reduction process produces subsets of positions that have a specified number of zeros at given relative locations. Each such set of positions is a closed under legal moves because the locations of zeros do not change. Thus, if the invariance reduction process leads to a solved equivalent game, then we can assert that for these subspaces of \P-positions, there cannot be a move to another \P-position. 

However, the moves in these subspaces do not account for all possible moves from a given position $\p \in \P$ because the minimum value of the reduced position is zero and thus any move keeps the location of the minimum fixed. This excludes moves to positions $\p'$ where a new, smaller minimum is at a different location. These types of moves need to be analyzed the ``old-fashioned way". Example~\ref{ex:Ppos52} illustrates this problem. \\

\begin{example} \label{ex:Ppos52} The \P-positions of \CN{5}{2} are given by $\P = \{\p=(M, m, c, d, m) \mid M + m = c + d \text{ and } M = \max(\p)\}$.  
The single invariant vector is $\z=\bm{1}$, so the reduced positions are of the form $(M-m,0,c-m,d-m,0)$.  For position $\p=(5,1,4,2,1)$, the move $\bm{m}=(0,0,2,1,0)$ to $\p'=(5,1,2,1,1)$ has a corresponding move for the reduced position, namely from $\tilde{\p}=(4,0,3,1,0)$ to $\tilde{\p}'=(4,0,1,0,0)$. However, the move $\bm{m}=(5,1,0,0,0)$ from $\p$ to $\p'=(0,0,4,2,1)$ does not have a corresponding move in the subspace of the reduced position because we cannot move on the minimal stacks.  
\end{example}

\subsection{Efficient Play with Invariant Vectors}

When we play, the only thing that matters is to know the winning moves for a player in an \N-position (as the person in a \P-position can make any move, it does not matter). Given the many possible moves, how can we play efficiently in the sense of having just a few facts that we need to know to be able to find the winning move?

Invariance will assist here too. For one, the structure of the invariant vectors, specifically if there are only a few different ones, can suggest a useful way to describe the set of \P-positions. For example, the \P-positions of \CN{5}{2} were listed as $(m,M,m,c,d)$ with $m$ being the minimum, $M$ being the maximum, and $m+M=c+d$. The only invariant vector is $\z=\bm{1}$. Since we are subtracting the same number of tokens from each stack when applying $\z$, this suggests to represent the \P-positions instead as $(m,m+x_1,m,m+x_2,m+x_3)$ with $x_1=x_2+x_3$. This makes it quite easy to see the structure of the reduced positions $\tilde{\p}=\p-m\z =(0,x_1,0,x_2,x_3)$. After zero and merge reductions, the sub-game is congruent to \Nim~on two stacks, with positions $\hat{\p}=(x_1,x_2+x_3)$; the winning move is to equalize the two stacks. 

Other examples of expressing the \P-positions in alignment with the invariant vectors are shown in Example~\ref{ex:CN63lc}, where we have iteratively built up a representation of the \P-positions as the linear combinations of the invariant vectors. Another example is the  game \CN{8}{3} discussed in Section~\ref{sec:CN_8-3}, where the two invariant vectors consisting of alternating zeros and ones lead to the representation of the \P-positions in terms of the minima of the two sets of stacks that are distance two apart. 

In the game \CN{5}{2} it is easy to remember the winning move because there is just one sub-game created by the single invariant vector. For games with multiple invariant vectors, we typically have to deal with several sub-games and need to know how to play in each of the cases.  Due to the \IRP, for each position $\p$ it is sufficient to just look at its invariance-reduced positions $\tilde{\p}$ (classified by the location and number of zeros) and know the associated reduced position $\hat{\p}$ and the sub-game to which $\hat{\p}$ belongs. The winning moves that we identify for the sub-game lead to winning moves for any position that reduces to the particular  sub-case. This greatly reduces the types of moves we need to keep track of.  Table~\ref{tab:HCheat}  shows a ``cheat sheet" for the game $H$ discussed in Section~\ref{sec:pathnim}.

\section {\P-positions of PathNim and Game $H$} \label{sec:pathnim}

In this section we derive the \P-positions of both the family of \ruleset{PathNim} games when play is allowed on at least half of the stacks and  of the game  $H \defeq \{\{a,b,c\},\linebreak[1]\{b,c,d\},\linebreak[1]\{c,d,e\},\linebreak[1]\{d,e,f\},\linebreak[1]\{a,f\}\}$. The game $H$ can be visualized as the game \PN{6}{3}~with one additional move, the move on the two ends. It will play an important role as a sub-game of \CN{7}{3} in the proofs of our main results.   

The result given in Theorem~\ref{thm:P-pos path} on the \P-positions of certain \ruleset{PathNim}~games  was stated and proved in~\cite{ES1996}  using the full machinery of simplicial complexes. We give a proof that just uses combinatorial game theory.

\begin{theorem} \label{thm:P-pos path} If $k \geq \lceil \frac{n}{2}\rceil$, that is, when play is allowed on at least half of the stacks, then  $$\P_{n,k}=\{(a_1,\ldots,a_\ell,\underbrace{0,\ldots,0}_{k-1},b_1,\ldots,b_m)\ | \ \ell+m=n-k+1, \sum_{i=1}^{\ell} a_i=\sum_{j=1}^m b_j, \min\{\ell,m\}\geq 1\}.$$
In particular, $\P_{n,n}=\{(0,\ldots,0)\}$.
    \end{theorem}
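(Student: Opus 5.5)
We plan to verify the two defining properties of the set $\P_{n,k}$ in the statement directly: (i) there is no move from a position of $\P_{n,k}$ to another one, and (ii) every position outside $\P_{n,k}$ has a move into it. A preliminary geometric observation drives everything. Index the stacks $1,\ldots,n$ and let the move windows be $W_i=\{i,\ldots,i+k-1\}$ for $1\le i\le n-k+1$. Fix a candidate zero block $Z_t=\{t+1,\ldots,t+k-1\}$ with $1\le t\le n-k$. Put the left part at $\{1,\ldots,t\}$ and the right part at $\{t+k,\ldots,n\}$, so $\ell=t$ and $m=n-k+1-t$.

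Since $|W_i|=k$ and the two outer parts are separated by $k-1$ stacks, no window meets both the left and the right part. Moreover $k\ge\lceil n/2\rceil$ gives $\ell,m\le n-k\le k$. Hence a window $W_i$ with $i\le t$ reaches at least position $k\ge t$, so it contains all of $\{i,\ldots,t\}$, and $W_1$ contains the entire left part. Symmetrically, $W_{n-k+1}$ contains the entire right part.

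Consequently, on the subspace of positions that vanish on $Z_t$, the zero reduction (Remark~\ref{rem:zero red}) leaves a game in which the left stacks always appear together and the right stacks always appear together. A merge reduction (Lemma~\ref{lem:merge red}) of each side then yields \Nim\ on two heaps, whose \P-positions are exactly those with equal heaps. This shows that within each closed subspace ``$Z_t$ is zero'' the \P-positions are precisely those with $\sum a_i=\sum b_j$.

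For (i), take $\p\in\P_{n,k}$ with zero block $Z_t$ and common side sum $L=R$, and consider any move. The move's window meets at most one side; say it meets the left side, so the right side is unchanged. If the result still vanishes on $Z_t$, we are in the solved subspace and there is nothing to show. Otherwise, any block of $k-1$ zeros in the result lies elsewhere, and we split into two cases.

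\emph{Block shifted right.} Then the new block covers some right stacks, which must already have been zero. The new left sum is (old left after the move) $+\,0<L$, while the new right sum is at most $R$ and equals $R$ minus only zeros, i.e.\ equals $R$. The sums differ.

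\emph{Block shifted left.} Then the new left part is a sub-collection of the reduced old left stacks, and the new right part contains the whole old right side. Writing $r>0$ for the number of tokens removed, we get new left $\le L-r<R\le$ new right.

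In both cases the sums are unequal, and the case where the move meets only the right side is symmetric. The case $k=n$ is the trivial statement that $\bm 0$ is the only \P-position.

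For (ii), which I expect to be the main step, we use a discrete intermediate-value argument. Let $L(t)=\sum_{i\le t}p_i$ and $R(t)=\sum_{j\ge t+k}p_j$; then $L$ is nondecreasing, $R$ is nonincreasing, $L(0)=0$, and $R(n-k+1)=0$. Playing on $W_t$, we can zero $Z_t$ and lower stack $t$ to balance the two sides whenever $L(t-1)\le R(t)\le L(t)$. Playing on $W_{t+1}$, we can zero $Z_t$ and lower stack $t+k$ whenever $R(t+1)\le L(t)\le R(t)$.

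Scan the interleaved sequence of comparisons $L(0)\le R(1)$, then $R(1)$ versus $L(1)$, then $L(1)$ versus $R(2)$, and so on. This sequence starts with ``$L\le R$'' and ends with $L(n-k)\ge R(n-k+1)=0$, so there is a first crossing. Each possible crossing is exactly one of the two conditions above for some $1\le t\le n-k$, which also guarantees $\ell,m\ge1$.

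The resulting move lands in $\P_{n,k}$, and it is a genuine move (at least one token removed) because $\p\notin\P_{n,k}$. The only care needed is bookkeeping at the boundary indices and checking that the crossing found really lies in the admissible range $1\le t\le n-k$.
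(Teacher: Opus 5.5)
Your proof is correct. Its core moves match the paper's, but both halves are organized differently.

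\textbf{No move between $\mathcal{P}$-positions.} The paper argues directly. A move touches only one side of the zero block, so it breaks the equality of the sums. Because $k\ge\lceil n/2\rceil$, no two admissible zero blocks can be separated by a non-zero stack, so the sums do not depend on which block is designated.

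You instead use Remark~\ref{rem:zero red} and Lemma~\ref{lem:merge red} to reduce the closed subspace on which $Z_t$ vanishes to Nim on the two heaps $L(t),R(t)$. This is in the spirit of the paper's later machinery, and it gives you more than you use. The game tree from a position vanishing on $Z_t$ never leaves that subspace. So this reduction alone shows that every member of the claimed set is a genuine $\mathcal{P}$-position. The absence of moves inside the set is then automatic, and your shifted-block analysis, though correct, is redundant.

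Two small points on that analysis. First, your dichotomy ``if the result still vanishes on $Z_t$ \dots{} otherwise'' is phrased wrongly, since a move never refills a zero stack. What you actually check is every admissible block $Z_{t'}$ on which $\bm{p}'$ vanishes. Second, in the right-shift case you tacitly need $t'-t\le n-k-1\le k-1$, so that the new left part contains no old right stack.

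\textbf{Existence of a move into the set.} You use the same moves as the paper: zero a block of $k-1$ stacks and trim one neighboring stack, all inside a single window. The difference is that your interleaved scan of $L(0),R(1),L(1),\dots,L(n-k),R(n-k+1)$ replaces the paper's preliminary cases and Cases 1--3 with one discrete intermediate value argument. The paper's Case 1 is your first condition at $t=1$, Case 2 is your second condition at $t=n-k$, and Case 3 is an interior crossing.

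Your version handles ties uniformly, including $L(i)=R(i+1)$, which the paper's Case 3 dichotomy does not treat explicitly. In exchange, the paper's case split gives explicit move recipes that are convenient for actual play.
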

    
\begin{proof} 
 Let $\p \in \P_{n,k}$. The condition  $\ell+m=n-k+1$ ensures that there are (at least) $k-1$ consecutive zeros. Play is restricted to one of the sets of stacks on the left and right of the sequence of consecutive zeros, of which there are at most $k$ on either side, and any move to $\p'$  destroys the equality of the two sums, thus $\p'\notin \P_{n,k}$. Because $k \geq \lceil \frac{n}{2}\rceil$, we cannot have two instances of a string of $k-1$ zeros that are separated by a non-zero stack (at most, they can be adjacent, forming a sequence of $2(k-1)$ consecutive zeros). In any such string of more than $k-1$ zeros, it does not matter where we assume the designated $k-1$ zeros to be, as the remaining zeros do not contribute anything to the sums on the left and right. 
 
 To show that there is always a move from $\p\notin \P_{n,k}$ to a position $\p'\in \P_{n,k}$, we consider two cases: If $\p$ contains a sequence of at least $k-1$ consecutive zeros with non-zero stacks on either side of the string of zeros, then equalize the two sums. Since the non-zero stacks consist of at most $k$ stacks on either side, an equalizing move can be made.  
 
 Now assume we do not have a string of zeros with two non-zero sums on either side. We need to find a location where we can reduce $k-1$ consecutive stacks to zero {\bf and} equalize the left and right sums.  Let $ \p = (p_1,  \ldots, p_n) \notin \P_{n,k}$. We define the \textit{left-hand sum} and \textit{right-hand sum} functions for $\p$ as 
 \[
L(i) = p_1 + \dots + p_i \quad \text{and} \quad R(i) = p_{i+k} + \dots + p_n 
\]
and will use $L'(i)$ and $R'(i)$ for the corresponding sums of an option $\p'$ of $\p$. First suppose there exists an index $i$ such that $L(i) = R(i)$.  Since $\p \notin \P_{n,k}$, at least one of the entries $p_{i+1}, p_{i+2}, \dots, p_{i+k-1}$ must be non-zero, so the move to $\p'=(p_1,\ldots, p_i,0,\ldots,0,p_{i+k},\ldots, p_n) \in  \P_{n,k}$ is legal.

Now assume that \( L(i) \ne R(i) \) for all \( i \). We look at three sub-cases and only indicate which stacks change in the move to $\p'$ and how they change. Any stack not mentioned is assumed to remain the same. 
\begin{itemize}
    \item \textbf{Case 1:} \( p_1=L(1) > R(1) \). We can move to $\p'\in \P_{n,k}$ where 
    \[
    p_1' =  R(1)  \text{ and} \phantom{x}p_2' = p_3' = \dots = p_k' = 0.
    \]
     \item \textbf{Case 2:} $L(n-k) < R(n-k)=p_n$.
    In this case, we move to $\p'\in \P_{n,k}$ where 
    $$ 
     p_{n-k+1}' =   p_{n-k+2}' = \dots = p_{n-1}' = 0 \phantom{x} \text{and } p_n' =  L(n-k).
    $$
    \item \textbf{Case 3:} Now assume that \( L(1) < R(1) \) and \( L(n-k) > R(n-k) \). Then there must be an index $i$ such that 
    $$L(i) < R(i) \quad \textnormal{and} \quad L(i+1) > R(i+1),$$ as otherwise, both \( L(i) < R(i) \) and \( L(i+1) < R(i+1) \) hold for all values of $i$. Together with  the assumption $L(1) < R(1)$ this would (iteratively) imply  that $L(n-k) < R(n-k)$, a contradiction. Hence, such an index $i$ must exist. Now we construct the winning move. We first set $p_{i+2}=\ldots=p_{i+k-1}=0$ and then use $p_{i+1}$ and $p_{i+k}$ to equalize the sums and to create the missing zero for the string of $k-1$ consecutive zeros. 
\begin{itemize}
    \item If $L(i)<R(i+1)$,  we let $ p'_{i+1}=R(i+1)-L(i)<L(i+1)-L(i)=p_{i+1}$. Then $R'(i+1)=R(i+1)$ and $L'(i+1)=L(i)+p'_{i+1}=R(i+1)$, so the two sums of $\p'$ are equal.
    \item If $L(i)>R(i+1)$,  we let $p_{i+k}'=L(i)-R(i+1) <R(i)-R(i+1)=p_{i+k}$. Then $L'(i)=L(i)$ and $R'(i)=p'_{i+k}+R(i+1)=L(i)$, so the two sums are once more equal.
\end{itemize}  
 \end{itemize}
This completes the proof. Figure~\ref{fig:case3move} visualizes the quantities involved. 
\end{proof}   
    
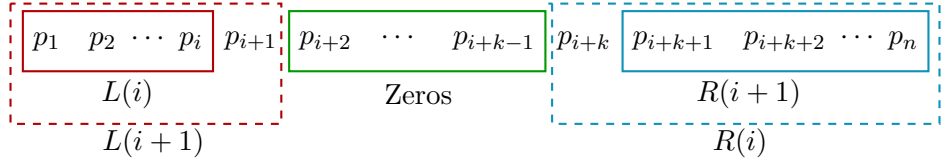
\begin{figure}[!hbt]
    \centering    

    \begin{tikzpicture}[font=\small, node distance=0mm]

\node[draw=red!70!black, thick, minimum height=0.8cm, minimum width=2.5cm, anchor=west] (L) at (0,0) {$p_1 \quad p_2 \ \cdots\ p_i$};

\node[anchor=west] (mid1) at (L.east) {$p_{i+1}$};

\node[draw=green!60!black, thick, minimum height=0.8cm, minimum width=2.5cm, anchor=west] (zeros) at (mid1.east) {$p_{i+2} \, \, \, \, \, \cdots  \, \, \, \, \, \ p_{i+k-1}$};

\node[anchor=west] (mid2) at (zeros.east) {$p_{i+k}$};

\node[draw=cyan!70!black, thick, minimum height=0.8cm, minimum width=2.5cm, anchor=west] (R) at (mid2.east) {$p_{i+k+1} \quad p_{i+k+2} \ \cdots \ p_n$};

\node[midway, xshift=40pt, yshift=-20pt]{$L(i)$};
\node[midway, xshift=150pt, yshift=-20pt]{Zeros};
\node[midway, xshift=275pt, yshift=-20pt]{$R(i+1)$};

\draw[thick, dashed, red!70!black] (-0.15,0.5)--(-0.15,-1.1)--(3.45,-1.1)--(3.45,0.5)--(-0.15,0.5);

\node[] at (1.75, -1.35){$L(i+1)$};

\draw[thick, dashed, cyan!70!black] (7.05,0.5)--(7.05,-1.1)--(12.2,-1.1)--(12.2,0.5)--(7,0.5);

\node[] at (9.55, -1.35){$R(i)$};

\end{tikzpicture}
    \caption{Quantities involved in the move to  $\p' \in \P_{n,k}$ in case 3 of the proof of Theorem~\ref{thm:P-pos path}.}
    \label{fig:case3move}
\end{figure}

We now turn our attention to the game $H$, defined in Theorem~\ref{thm:H}. The game $H$ is a sub-game of both \CN{7}{3} and \CN{9}{4}, so Theorem~\ref{thm:H} is a crucial piece in the derivation of the \P-positions of both games. We give the solution for \CN{7}{3} in Section~\ref{sec:CN_7-3}.  While we also have the result for the \P-positions of \CN{9}{4},  our current proof is tedious and too long for inclusion here. The solution for \CN{9}{4} will be a focal point in a subsequent paper.

\begin{theorem} \label{thm:H} Let $H\defeq\{\{a,b,c\},\{b,c,d\},\{c,d,e\},\{d,e,f\},\{a,f\}\}$. Then     
     $$ \P_H= \{(a,b,c,d,e,f) \mid a \le f, \,a+b+c=d+e+f,\, 
        a=d+min(c, e)\}. $$
\end{theorem}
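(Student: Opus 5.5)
The plan is to prove both halves with the tools of Section~\ref{sec:invar}. Let $\P$ denote the proposed set in Theorem~\ref{thm:H}. First I will check that $\z_1=(1,1,0,1,0,1)$ and $\z_2=(1,0,1,0,1,1)$ are invariant for $\P$. Adding $c\z_1$ raises $a$ and $f$ together, so $a\le f$ is preserved. It adds $2c$ to both $a+b+c$ and $d+e+f$. It raises $a$ and $d$ by $c$ and leaves $c,e$ fixed. Adding $c\z_2$ again preserves $a\le f$ and adds $2c$ to both sums. It raises $a$ by $c$ and raises both $c$ and $e$ by $c$, hence raises $\min(c,e)$ by $c$. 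These checks are pure equation bookkeeping. Then I will apply the \IRP\ to an arbitrary $\p\notin\P$. Reducing by $\z_1$ creates a zero in $\{a,b,d,f\}$, and reducing by $\z_2$ creates a zero in $\{a,c,e,f\}$. So $\tilde{\p}$ falls into one of these subspaces: a zero at $a$, a zero at $f$, $b=c=0$, $c=d=0$, $b=e=0$, or $d=e=0$. The last is the reflection-type partner of $b=c=0$ under $a\leftrightarrow f$, $b\leftrightarrow e$, $c\leftrightarrow d$, but the conditions are not symmetric, so I will treat it separately.

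For each subspace I will zero-reduce and then merge-reduce $H$, read off the reduced \P-positions from Theorem~\ref{thm:sumCN} or Theorem~\ref{thm:P-pos path}, and check that they coincide with $\P$ restricted to that subspace. Two examples show the pattern. If $f=0$, the game becomes $\{\{a,b,c\},\{b,c,d\},\{c,d,e\}\}\cong$ \PN{5}{3}. Its \P-positions $(0,b,0,0,e)$ with $b=e$, or $(a,0,0,d,e)$-type strings, have to be matched against $\P$. Here $f=0$ forces $a=0$, then $d=\min(c,e)=0$ and $b+c=e$, which gives exactly the \PN{5}{3} pattern. If $b=c=0$, the sets become $\{a\},\{d\},\{d,e\},\{d,e,f\},\{a,f\}$, which merge to a \PN{3}{2}-type chain $a - f - \{d,e\}$ after checking which merges are legal. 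I expect each subspace to land in \PN{n}{k} with $k\ge\lceil n/2\rceil$ or in \CN{3}{2}. In every case the reduced \P-condition should translate back into $a\le f$, $a+b+c=d+e+f$, $a=d+\min(c,e)$. Step~3 of the \IRP\ then lifts the winning move of the sub-game back to $H$, and Lemma~\ref{lem:inv} gives a move from $\p$ into $\P$.

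For the claim that there is no move between two positions of $\P$, the subspace analysis already handles moves that keep the zero pattern of $\tilde{\p}$, since each reduced game is solved. The remaining moves, as in Example~\ref{ex:Ppos52}, are those that create a new minimum elsewhere. I will handle these directly. Take $\p\in\P$ and a move on one of the five sets, giving $\p'$ with $\p'\in\P$, and derive a contradiction by comparing the conditions. A move on $\{a,f\}$ changes only the sides with $a$ and $f$, so $a+b+c=d+e+f$ forces $\Delta a=\Delta f$, and then $a=d+\min(c,e)$ forces $\Delta a=0$. A move on $\{a,b,c\}$ leaves $d,e,f$ fixed, so the sum equation forces $\Delta a+\Delta b+\Delta c=0$, which is impossible. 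A move on $\{d,e,f\}$ is ruled out the same way. For $\{b,c,d\}$, the sum equation gives $\Delta b+\Delta c=\Delta d+\Delta e$ with $\Delta e=0$. The equation $a=d+\min(c,e)$ with $a$ fixed then gives $\Delta d=-\Delta\min(c,e)$, and I will show this forces everything to vanish using $\Delta d\le0$ and $\Delta\min(c,e)\le0$. The set $\{c,d,e\}$ is handled the same way.

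The main obstacle I expect is the forward direction in the subspaces where the reduced position has only a single zero, at $a$ or at $f$, and the merge structure is not immediate. There I may need a second application of $\z_1,\z_2$, or a short Theorem~\ref{thm:P-pos path}-style argument, to build the equalizing move directly. I will treat these cases first and verify them on the examples in Table~\ref{tab:inv_var2}.
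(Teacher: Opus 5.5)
You follow the paper's route: the same invariant vectors $\z_1,\z_2$, the same zero patterns and sub-games, and the same direct check of the five move sets for the no-move direction. One correction on the sub-games: $b=e=0$ reduces to \CN{4}{2}, not to a \ruleset{PathNim} or \CN{3}{2}.

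\textbf{The gap.} You try to prove the displayed set literally, and you explicitly decline to use the mirror symmetry (``the conditions are not symmetric, so I will treat it separately''). The subspace consistency checks you then rely on are false. The reversal $(a,b,c,d,e,f)\mapsto(f,e,d,c,b,a)$ permutes the move sets of $H$ and swaps $\z_1$ and $\z_2$, so $\P_H$ must be closed under it. The displayed set is not. For example, $(1,0,0,1,0,0)$ is a \P-position: no move set contains both $a$ and $d$, and it is $\bm{e}(\{1,4\})$ for the circuit $\{1,4\}$ of Example~\ref{ex:Hgame}. Yet it violates $a\le f$, while its mirror $(0,0,1,0,0,1)$ satisfies all three conditions.

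Consequently your assertion that for $f=0$ the restriction ``gives exactly the \PN{5}{3} pattern'' is wrong. With $f=0$ the literal conditions leave only $(0,b,0,0,b,0)$. But \PN{5}{3} on $(a,b,c,d,e)$ also has $(a,0,0,d,e)$ with $a=d+e$, and $(a,b,0,0,e)$ with $a+b=e$. The same mismatch appears in the other cases:
\begin{itemize}
\item $b=c=0$: the \PN{3}{2} \P-positions are $f=0$, $a=d+e$, while the literal conditions give only $\bm{0}$.
\item $c=d=0$: you lose $f=0$, $e=a+b$.
\item $b=e=0$: \CN{4}{2} gives $a=d$, $c=f$ with no constraint $a\le f$.
\end{itemize}
So Step 2 of the \IRP{} cannot be justified for the literal set, and the forward direction does not close.

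\textbf{The missing idea.} The condition $a\le f$ only fixes an orientation, exactly like $b\le g$ in Theorem~\ref{thm:P7-3}. The statement must be read as ``$\p\in\P_H$ iff $\p$ or its reversal satisfies the three conditions.'' This reading is what lets the paper's proof treat $d=e=0$ as equivalent to $b=c=0$ and list the zero patterns ``up to symmetry.''

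With this reading, $f=0$ and $d=e=0$ are the mirror images of $a=0$ and $b=c=0$. In each of the four remaining cases, the sub-game's \P-positions are exactly the union of the stated set and its mirror on that subspace, so the \IRP{} goes through.

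The no-move direction then needs one more step. Take $\p$ in the displayed set. Every move that survives the triple-sum test (a move on $\{a,f\}$ with $\Delta a=\Delta f$, or a move on $\{b,c,d\}$ or $\{c,d,e\}$) preserves $a-f\le 0$. So landing in the mirrored set forces $a'=f'$. When $a=f$, the triple-sum condition gives $b+c=d+e$, hence $d+\min(c,e)=\min(c+d,b+c)=c+\min(b,d)$. The two forms of the conditions then coincide, and your direct argument applies.

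Finally, the obstacle you anticipate is not one. A single zero at $a$ (or $f$) zero-reduces directly to \PN{5}{3}, which Theorem~\ref{thm:P-pos path} already solves.
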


For ease of readability, we will adopt a simplified notation for the move sets in the proofs that follow. Instead of the set notation $\{a,b,c\}$ we will use $(abc)$.

\begin{proof}
 To show that there is no move from $\p \in \P_H$ to $\p' \in \P_H$, we consider the impacts of the various allowed moves on the two equations, which we will refer to as triple sum and min sum condition.  A move on either $(abc)$ or $(def)$ affects one side of the triple sum equality and not the other, so $\p' \notin \P_H$. A move on $(af)$ needs to reduce both stacks by the same amount to maintain the triple sum condition,  so $a' \leq f'$. The move affects only one side of the min sum condition, so once more, $\p' \notin \P_H$. For the move $(bcd)$, to maintain the triple sum condition, there has to be a change in $d$ that is matched by the combined change of $b$ and $c$. Such a move only changes the right-hand side of the min condition, so $\p' \notin \P_H$. A similar argument applies for the move $(cde)$. This completes this part of the proof.

Now we show that from $\p=(a,b,c,d,e,f) \notin \P_H$, there is a move to $\p' \in \P_H$ using the \IRP. It is immediate from the description of $\P_H$ that the two zero-one vectors $\bm{z}_1=(1,1,0,1,0,1)$ and $\bm{z}_2=(1,0,1,0,1,1)$ are invariant for $\P_H$. Let $c_1=\im(\z_1;\p)$, $\p_{(1)}=\p-c_1 \z_1$, $c_2=\im(\z_2;\p_{(1)})$, and $\tilde{\p}=\p_{(1)}-c_2 \z_2$. This process will create either a single zero (necessarily at stack $a$) or at least two zeros in $\tilde{\p}$. The configuration of the zeros falls into one of the four cases (up to symmetry) below. In each case, we reduce to a sub-game that is solved, so we know there is a legal move to a position $\p' \in \P_H$ by the \IRP. We just state the reduced game. Note that the cases relate to stack heights of $\tilde{\p}$. 
\begin{itemize}
\item {\bf Case 1}: $a=0$. 
Zero reduction gives $H\cong \{\{b,c,d\},\{c,d,e\}, \{d,e,f\}\} \cong$  \PN{5}{3}. 
\item {\bf Case 2}: $b=c=0$ (equivalent to $d=e=0$). Zero reduction followed by a merge reduction gives  $H \defeq \{\{ a,f\},\{d,e,f \} \}\cong \{\{ a,f\},\{f, d+e \}\}\cong$ \PN{3}{2}.
\item {\bf Case 3}: $c=d=0$.
Here $H$ reduces to $\{\{a,b\},\{e,f\},\{a,f\}\}\cong \{\{b,a\},\allowbreak \{a,f\},\allowbreak \{f,e\}\}\allowbreak \cong$ \PN{4}{2}.
\item {\bf Case 4}: $b=e=0$. 
Zero reduction gives $H \defeq \{\{a,c\},\{c,d\},\{d,f\},\{f,a\}\}\cong$ \CN{4}{2}.
\end{itemize}

In each of the sub-cases, identify the winning move  $\hat{\bm{m}}$ for the reduced game and then create from it the winning move $\bm{m}$ as described in the \IRP. Finally compute $\p' \in \P_G$. This completes the proof.
\end{proof}

Examples~\ref{ex:irp} and~\ref{ex:irp2} show the details the process of finding the actual position $\p' \in \P_H$, for positions that fall into cases 1 and 2. Figure~\ref{fig:case1move} visualizes the process. 

\begin{example}\label{ex:irp}
First we consider a position with $\min(\p)=a$, which implies that $c_1=a$ and $c_2=0$.  Let $\p=(2,6,11,8,3,12)\notin \P_H$. Then $\tilde{\p}=\p - 2\cdot \z_1=(0,4,11,6,3,10)$. The zero reduction eliminates the first stack, so $\hat{\p}=(4,11,6,3,10)$. The reduced game is \ruleset{PathNim} \PN{5}{3}, so we need to create two consecutive zeros with equal sums on either side. One such move is $\hat{\bm{m}}=(0,5,6,3,0)$, which leads to $\hat{\p}'=(4,6,0,0,10)\in \P_{5,3}$. Reversing the steps used to obtain the sub-game, we add back the zero stack. It did not change, so $\bm{m}=(0,0,5,6,3,0)$. Using the same move $\bm{m}$ yields $\p'=\p-\bm{m}=(2,6,6,2,0,12)\in\P_H$. 
\end{example}

\begin{figure}[h!]
    \centering

\begin{tikzpicture}[font=\small]

\node at (0,3.5) (p) {$\p=(2,6,11,8,3,12) \notin \P_{H}~~~$};
\node at (11,3.5) (p1) {$~~~\p'=(2,6,6,2,0,12) \in \P_{H}$};

\node at (0,1.75) (p2) {$\tilde{\p}=(0,4,11,6,3,10) \notin \P_{H}~~~$};

\node at (0,0) (p4) {$~~\hat{\p}=(4,11,6,3,10) \notin \P_{5,3}~~~$};

\node at (11,0) (p5) {$~~~\hat{\p}'=(4,6,0,0,10) \in \P_{{5,3}}~~$};

\draw[blue,thick,-{Latex[length=3.2mm,width=2mm]}] (p) -- node[below] {$\bm{m}=(0,0,5,6,3,0)$} node[above,pos=0.5] {\boxed{4}} (p1);

\draw[thick,green!60!black,-{Latex[length=3.2mm,width=2mm]}] (p) -- node[right, pos=0.34] {$-2 \cdot z_1$} node[left,pos=0.34] {\boxed{1}} (p2);

\draw[thick,magenta,-{Latex[length=3.2mm,width=2mm]}] (p2) -- node[right, pos=0.34] {Zero reduction}(p4);

\node[magenta] at (-0.3,1.1) {\boxed{2}};

\draw[thick, blue,-{Latex[length=3.2mm,width=2mm]}] (p4) -- node[above] {$\hat{\bm{m}}=(0,5,6,3,0)$} (p5);

\draw[thick,magenta,dashed,-{Latex[length=3.2mm,width=2mm]}] (5.5,0.76) -- node[right,xshift=4pt,yshift=1pt, pos=0.5] {\boxed{3}}(5.5,3);

\end{tikzpicture}
    \caption{The steps of the \IRP~for a position $\p \notin \P_H$ with $\min(\p)=a$, which reduces to game \PN{5}{3} (case 1 in the proof of Theorem~\ref{thm:H}), using the position of Example~\ref{ex:irp}.}
    \label{fig:case1move}
\end{figure}
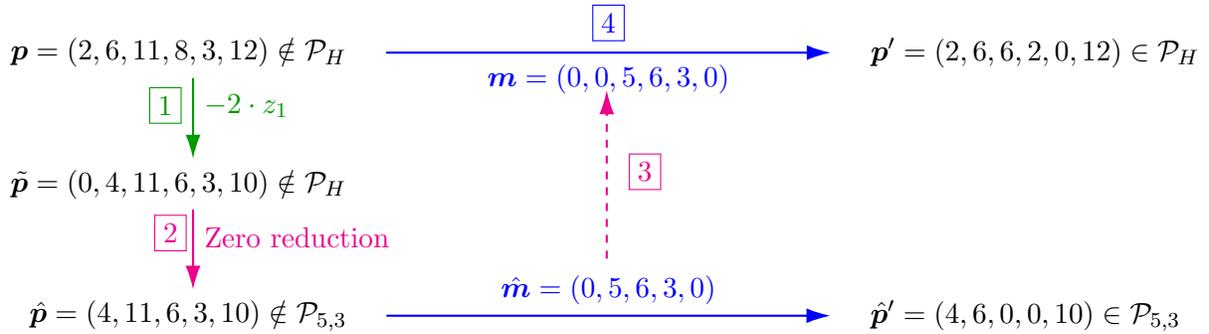

\begin{example} \label{ex:irp2}
    Now we consider a position where the reduction involves both a zero reduction and a merge reduction. Let $\p=(6,2,9,1,3,12) \notin \P_H$. Here $c_1=\min(6,2,1,12)=1$, $\p_{(1)}=\p-\z_1=(5,1,9,0,3,11)$, $c_2=\min(5,9,3,11)=3$, $\tilde{\p}=\p_{(1)}-3\z_2=(2,1,6,0,0,8)$. This position has $d=e=0$, so falls into the alternate version of case 2. We first perform the zero reduction, resulting in sub-game $\{\{a,b,c\},\{a,f\}\}$, which can be merge-reduced to $\{\{a,b+c\},\{a,f\}\}\cong$ \PN{3}{2} with positions $(b+c,a,f)$. Thus $\hat{\p}=(7,2,8)$. We need to create a zero with equal stack heights on the left and right, so the move is $\hat{\bm{m}}=(0,2,1)$, which leads to $\hat{\p}'=(7,0,7)\in \P_{3,2}$. This translates to no decrease in the $b$ or $c$ stacks, decrease of $2$ on stack $a$, and decrease of $1$ on the $f$ stack. Accounting for the zero stacks $d$ and $e$, which also have not changed, the corresponding move in the original game is   $\bm{m}=(2,0,0,0,0,1)$ and  we obtain $\p'=\p-\bm{m}=(4,2,9,1,3,11) \in \P_H$.
\end{example}

Table~\ref{tab:HCheat} provides a ``cheat sheet" for playing the game $H$.  For each of the cases in the proof of Theorem~\ref{thm:H}, it shows the reduced positions and the game in which we need to find a move. Theorems~\ref{thm:P-pos path} and~\ref{thm:sumCN} list the relevant \P-positions for the respective sub-games.

\begingroup
\setlength{\tabcolsep}{10pt} 
\renewcommand{\arraystretch}{1.25} 

\begin{table}[h!]
\centering
\begin{tabular}{|c|c|c|c|}
\hline
{\bf Case }              
& \multicolumn{1}{c|}{$\tilde{\p}$} 
& \multicolumn{1}{c|}{$\hat{\p}$} 
& \multicolumn{1}{c|}{{\bf Reduced Game}}      \\ \hline \hline

1  & $(\bm{0},b,c,d,e,f)$                   
    & $(b,c,d,e,f)$    & \PN{5}{3}     \\ \hline \hline

\multirow{2}{*}{2} & $(a,\bm{0},\bm{0},d,e,f) $                 
    & $(a,f,d+e)$  
    & \multirow{2}{*}{\PN{3}{2}  } \\ \cline{2-3}
    & $(a,b,c,\bm{0},\bm{0},f)$    & $(b+c,a,f)$   &    \\ \hline \hline

3   & $(a,b,\bm{0},\bm{0},e,f)$    & $(b,a,f,e)$   & \PN{4}{2}                  \\ \hline \hline

4   & $(a,\bm{0},c,d,\bm{0},f)$    & $(a,c,d,f)$    & \CN{4}{2}                  \\ \hline
\end{tabular}

\caption{Positions $\tilde{\p}$ of game $H$ after invariance reduction, corresponding position $\hat{\p}$ in associated zero- and merge-reduced sub-game, and the sub-game in which need to find a winning move from  $\hat{\p}$. Zeros entries of $\tilde{\p}$ are indicated in bold for easy reference to the case in the proof of Theorem~\ref{thm:H}.}
\label{tab:HCheat}
\end{table}
\endgroup

We now turn to the main results of this paper.

\section{The \P-positions of  \CN{7}{3}}\label{sec:CN_7-3}

\begin{theorem} \label{thm:P7-3} Let $\p=(a,b,c,d,e,f,g)$. The set of \P-positions for the game \CN{7}{3} is given by 
$$\P_{7,3}=\{ \p \mid a=\min(\p),~b\leq g,~a+b=e+\min(d,f),~b+c+d=e+f+g\}.$$
\end{theorem}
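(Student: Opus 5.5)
The plan is to prove the two standard halves separately: no move leads from $\p\in\P_{7,3}$ to another \P-position, and every $\p\notin\P_{7,3}$ has a move into $\P_{7,3}$. The second half is handled with the \IRP. Throughout, positions are taken up to rotation and reflection, and a position is in $\P_{7,3}$ if some rotation or reflection satisfies the stated conditions.

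\textbf{Invariant vectors.} First I would record the invariant vectors.
\begin{itemize}
\item The vector $\bm{1}$ is invariant for $\P_{7,3}$. The equation $a+b=e+\min(d,f)$ gains $2$ on each side, the equation $b+c+d=e+f+g$ gains $3$ on each side, and $b\le g$ and $a=\min(\p)$ are unaffected.
\item After subtracting $\min(\p)\cdot\bm{1}$ and rotating the minimum to position $a$, we are in the subspace $a=0$. There the conditions become $b\le g$, $b=e+\min(d,f)$ and $b+c+d=e+f+g$, and the condition $a=\min$ is automatic.
\item In this subspace, $\z_1=(0,1,0,1,0,1,1)$ and $\z_2=(0,1,1,0,1,0,1)$ are invariant for the reduced set. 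Each adds $1$ to both sides of both equations and to both $b$ and $g$. The subtlety that subtraction might push a stack below $a$ disappears once $a=0$.
\end{itemize}
Ignoring the $a$-coordinate, these vectors play the role of the vectors $\z_1,\z_2$ of the game $H$. The reduction is therefore two-level. First subtract $\min(\p)\cdot\bm{1}$, rotating and reflecting so that the zero sits at $a$; if there are several minima, I would choose among them and the reflection so that $b\le g$ where possible. Then apply the \IRP\ with $\z_1$ and $\z_2$ on the remaining six stacks $b,\dots,g$.

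\textbf{Case analysis for moves from \N-positions.} Zero-reducing \CN{7}{3} at $a=0$ leaves the move sets $(bcd),(cde),(def),(efg),(fgb),(gbc)$. The second round of reduction creates further zeros among $b,d,f,g$ (from $\z_1$) and among $b,c,e,g$ (from $\z_2$). I would enumerate the zero configurations up to the reflection fixing $a$ and zero/merge reduce each one. The expected targets are the following.
\begin{itemize}
\item When a second zero is adjacent to $a$, the positions have a string of $k-1=2$ zeros, so we get \PN{5}{3} via Example~\ref{ex:red_fam} and Theorem~\ref{thm:P-pos path}.
\item Configurations such as $d=0$ or $f=0$ reduce to the game $H$ of Theorem~\ref{thm:H}. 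For example, zeros at $a$ and at distance three from it leave a path-like structure with one wrap-around pair, which is exactly $H$.
\item The remaining configurations reduce to \PN{4}{2}, \PN{3}{2} and \CN{4}{2} via merges, as in Table~\ref{tab:HCheat}.
\end{itemize}
For each case I would check that the solved sub-game's \P-positions, lifted back, coincide with the corresponding slice of $\P_{7,3}$, for instance that the \P-condition of $H$ matches $b=e+\min(d,f)$ under the relabelling. Then Lemma~\ref{lem:inv} and steps 3 and 4 of the \IRP\ produce the winning move.

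\textbf{No move between \P-positions.} For the other half I would argue directly, as in the proof of Theorem~\ref{thm:H}, listing the seven move sets $(abc),(bcd),\dots,(gab)$. Moves that keep $a$ as a minimum and keep the orientation fixed are excluded by the subspace argument: each subspace is closed and isomorphic to a solved game. The main obstacle, as Example~\ref{ex:Ppos52} warns, is moves that create a new minimum at a different location, or that require re-reading the position under a different rotation or reflection. For these I would take $\p'$ with a new minimal stack and rotate so that it sits at $a$. I would then show that both equations cannot hold simultaneously, using that only three consecutive stacks changed, together with the inequality $b\le g$ and the original relations in $\p$. This will likely need a few explicit subcases, according to which triple is played and where the new minimum lands relative to the old $a$.
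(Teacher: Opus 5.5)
Your architecture matches the paper's: invariance of $\bm 1$, reduction to the slice $a=0$, and a direct case analysis on the location of a new minimum. However, the key reduction is misidentified, and the harder half is left undone.

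\textbf{The \N-to-\P~direction.} Zero reduction intersects each move set with the surviving vertices; it does not close up the cycle. So $(fga)$, $(gab)$, $(abc)$ become $(fg)$, $(gb)$, $(bc)$, not $(fgb)$, $(gbc)$. Stacks $f$ and $b$ (and likewise $g$ and $c$) are at distance three and can never be played together.
\begin{itemize}
\item What you listed is \CN{6}{3} on $b,\dots,g$. Its \P-positions are different, and your $\z_1$ is not even invariant for it ($c+d$ gains $1$, while $f+g$ gains $2$).
\item Case reductions computed from your list go wrong. For example, $d=e=0$ would give $\{f,g,b\},\{g,b,c\}\cong$ \PN{3}{2}, instead of the correct \PN{4}{2}.
\item The correct reduced game is $\{(bcd),(cde),(def),(efg),(gb)\}$. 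This is exactly $H$ under $(a,\dots,f)\mapsto(b,\dots,g)$, and $\P_H$ becomes precisely the slice $b\le g$, $b=e+\min(d,f)$, $b+c+d=e+f+g$.
\end{itemize}
That one observation is the paper's entire argument for this direction: subtract $\min(\p)\cdot\bm 1$, zero-reduce, and invoke Theorem~\ref{thm:H}. Your second round with $\z_1,\z_2$ just re-proves Theorem~\ref{thm:H}; its cases are those of Table~\ref{tab:HCheat} shifted by one index. Your claim that a zero at distance three from $a$ yields $H$ is also false. Zeros at $a$ and $d$ leave the five-stack game $\{b,c\},\{c,e\},\{e,f,g\},\{g,b\}$, and this configuration never arises once both $\z_1$ and $\z_2$ have been applied.

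\textbf{The no-move-between-\P-positions direction.} Here you give only a plan, yet this is where nearly all of the paper's work lies. The missing ingredient is the pair of inequalities $e\le b\le g$ and $d\le g$, derived from (MS), (TS) and $b\le g$ (Remark~\ref{rem:ineqs}). These are combined with reading $\p'$ in the orientation set by the smaller neighbour of the new minimum.

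For example, take the new minimum at $c'$ with $d'>b'$. Reading the conditions from $c'$ towards $b'$ gives $c'+b'=f+e'$ (since $g\ge b\ge e\ge e'$) and $b'+a'+g=d'+e'+f$. Hence $a'+g=c'+d'$, which forces $g<d'\le d$, contradicting $d\le g$. Most subcases end this way.

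Your subspace argument also covers only moves that leave stack $a$ untouched. If the move lowers $a$ and $a$ stays minimal, then $\p-a\bm1\to\p'-a'\bm1$ is not a move. Those moves need the direct (MS)/(TS) check of the paper's first case.

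Conversely, once the slice is correctly identified with $H$, the subspace argument disposes of every move that keeps the minimum value, in any orientation. Every zero of $\p'-\min(\p')\bm1$ gives a zero reduction to $H$, and these must agree on the outcome. So re-reading under a different rotation or reflection is a genuine difficulty only when the minimum value drops.
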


We will refer to the last two conditions as the min sum (MS) and the triple sum (TS)  conditions, respectively, and will use \MS~and \TS~when we refer to the corresponding conditions for $\p'$. In the proof, we will also use \LM~and \RM~to refer to the left- and right-hand side of \MS, respectively, and corresponding notation for \TS.

\begin{remark} \label{rem:ineqs} Note that the (TS) and (MS) conditions for \p~imply that $b \geq e$ and $g \geq d$. The first inequality follows from  $b=e+\min(d,f)-a \geq e$.  We also can derive that $g\geq b \geq \min(d,f)$. If $\min(d,f)=d$, then $g\geq d$ is immediate. If $\min(d,f)=f$, then by (MS), $a+b=e+f$. Using (TS) and $a+b=e+f$, we get that $c+d=a+g$, which implies $d\leq g$.
\end{remark}

\begin{proof}
First we show that there is no move from a position $\p\in\P_{7,3}$ to a position $\p'\in\P_{7,3}$.   There are two possibilities for $\p'$: the new minimum is at the $a$ stack or it is at some other stack. In the first case, the only play that maintains both the (MS) and the (TS) conditions is to play on stacks $b$ and $e$, reducing them both by the same amount. This requires play on four stacks which is not allowed. 

Now let's consider the possibility that $\min(\p')$ is not at the $a$ stack, so $\min(\p')<a$. We will consider each alternative location of the new minimum, and indicate stacks that may have been changed by a move resulting in this new minimum by a prime.  While five stacks will be listed with primes, only three consecutive stacks have actually changed; which ones depends on the specific move.  When considering the various locations of the new minimum, we need to adjust (rotate) the \TS~and \MS~conditions based on the location of the minimum and need to consider which direction (clockwise or counter-clockwise) is appropriate based on the relative stack heights of the two stacks adjacent to the new minimum. We consider two sub-cases, and will start with the clockwise orientation first for each location of the new minimum. The conditions regarding  the direction  may become part of the argument, but not always. Furthermore, $\min(\p')<a'\leq a$ since we assume that the new minimum is not at stack $a$.

\begin{itemize}
    \item $\min(\p')=b'$, $\p'=(a',b',c',d',e,f,g')$: If $c'<a'$, then for moves $(abc)$ or $(bcd)$ we have that $\LM=b'+c'<2a\leq f+\min(g,e)=\RM$, while this case cannot happen for move $(gab)$, since $c=c'<a'\leq a$, a contradiction. If $a'\leq c'$, then for moves $(gab)$ or $(abc)$, we have that $\LM =a'+b'<a+b=e+\min(d,f)=\RM$. If play is on $(bcd)$ with $d'<d$ (otherwise move is covered by $(abc)$), then $\MS$ is given by $a+b'=e+\min(d',f)$. If $\MS$ is true, then we must have that  $e=a$ and $d'=b'$.  Using $a=e$, (TS), and $b \geq e$, we have that  $\LT=a+g+f= e+g+f= b+c+d>e+c'+d'=\RT$, so  at least one of the two conditions fails. Thus,  $\p' \notin \P_{7,3}$. 
    \item $\min(\p')=c'$, $\p'=(a',b',c',d',e',f,g)$:  First assume that $ d' \leq b' $. Since  $ c' < a' $  and $d' \leq d \leq g$, it follows that $\LM = c' + d' < a' + g = g + \min(a', f) = \RM$. Now, assume instead that \( d' > b' \). Then the required equations are $c' + b' = f + \min(g, e') = f + e'$ (because $g \geq b\geq e$) and $b' + a' + g = d' + e' + f$. Combining $\MS$ and $\TS$, we obtain that $a' + g = d' + c'$. This implies $g < d'$, which contradicts that $g \geq d$ by Remark~\ref{rem:ineqs}. Therefore, at least one of the equations $\MS$ or $\TS$ is invalid, so $\p' \notin \P_{7,3}$.
    
    \item $\min(\p')=d'$, $\p'=(a,b',c',d',e',f',g)$: First, assume that $c' \geq e'$.  For moves on $(cde)$ or $(def)$, we have that $\LM = d' + e' < \min(d, f) + e = a + b = a+\min(g,b)=\RM$, where we have used that $d'<a\leq \min(d, f)$ and the (MS) condition. For the move on $(bcd)$ with $b'<b$, $\LT=e+f+g =b+c+d > b'+c'+a=\RT$. Now suppose that $c' < e'$. Since $c' < e' \leq b \leq g$, we have $\LM = d' + c' < \min(f', a) + g =\RM$. Thus, $\p' \notin \P_{7,3}$. 
    
    \item $\min(\p')=e'$, $\p'=(a,b,c',d',e',f',g')$: First suppose $f' \leq d'$. Then for moves on  $(def)$ and $(efg)$, $\LM = e' + f' < e + \min(d, f) = a + b = b+\min(a,c)=\RM$.  For the move on $(cde)$ with $c'<c$ and $f'=f$, we have that $\MS$ becomes $e' + f=b+\min(a,c')$. If $a\leq c'$, then $\MS$ fails in the same way as for the other two moves. If on the other hand, $c'<a$ and we assume that $\MS$ holds, then $\TS$ becomes $ f+g+a=b+c'+d'=e'+f+d'$. Since $e'<a$, this implies that $g< d'$, a contradiction. Therefore, at least one of the two conditions fails. 
    
    Now assume that $f' > d'$. If play is on $(def)$ or $(cde)$, then $\LM = e' + d' <  e + \min(d, f) = a + b = a+\min(b,g)=\RM$. For the move on $(efg)$ with $g' < g$ we have  that $\LT = b + c + d = e + f + g \geq a + f + g > a + f' + g' = \RT$, using (TS) for the first equality.  Hence, in all sub-cases, $\p' \notin \P_{7,3}$. 
    \item $\min(\p')=f'$, $\p'=(a',b,c,d',e',f',g')$:   First suppose $e' > g'$. Then $\LM = f' + g'$, and $\RM = c + \min(b, d')$. If $b \leq d'$, then $\RM = c + b \geq a + b = e + \min(d, f) > e' + f' > g' + f' = \LM$.  On the other hand, if $b > d'$, then \MS~and\TS~are given by $f' + g' = c + d'$, and $e' + d' + c = g' + a' + b$. Combining these gives $e' + f' = a' + b$, which implies $e' > b$, contradicting  that $b \geq e \geq e'$ by Remark~\ref{rem:ineqs}. Now assume $e' \leq g'$. Then, $\LM = f' + e' < a' + b = b + \min(a', c) = \RM$ since $f' < a'$  and $e' \leq e \leq b$. Therefore, \( p' \notin \P_{7,3} \) in all sub-cases.
    \item $\min(\p')=g'$, $\p'=(a',b',c,d,e',f',g')$: If we assume that $g'$ is a unique minimum, then if $a' \leq f'$, we have that $\LM=g'+a'\leq g'+a < \min(c,e')+g=\RM$, because $g'<e'$ due to uniqueness, and $g'<a\leq c$. Similarly, if $a'>f'$, then $\LM=g'+f'< g'+a<\min(d,b')+c =\RM$,  irrespective of the move that was made.  If $g'$ is not unique, then a second minimum occurs at one of the other stacks, and those cases have been discussed above.   Thus, $\p' \notin\P_{7,3}$. 
\end{itemize}

\noindent This concludes the proof that there is no move from $\p \in \P_{7,3}$ to $\p' \in \P_{7,3}$. We next show that for any $\p \notin \P_{7,3}$ there is a move to $\p' \in \P_{7,3}$. It is easy to check that  $\z=\bm{1}$ is an invariant vector for $\P_{7,3}$. Therefore, by Lemma~\ref{lem:inv}, we can restrict ourselves to finding a move from $\tilde{\p}=\p-\min(\p)\cdot \z$, that is, we can assume that $\min(\p)=0$.  Performing a zero reduction results in $\tilde{G}\defeq \{\{b,c,d\},\{c,d,e\},\{d,e,f\},\{e,f,g\},\allowbreak \{g,b\}\}\cong H$, which is solved (see Theorem~\ref{thm:H}), so we can find  a move from any $\p \notin \P_{7,3}$  to $\p' \in \P_{7,3}$.  This completes the proof. 
\end{proof}
We can use Table~\ref{tab:HCheat} to play \CN{7}{3}. Reduce a position $\p=(a,b,c,d,e,f,g)$ of CN(7,3) by subtracting the minimum $a$ from all stacks. On the stacks $(b,c,d,e,f,g)$ perform the invariance reductions for the game $H$. Play the move from game $H$ on all but the minimal stack, which remains unchanged.  Example~\ref{ex:CN73play} provides a numerical example of that process.

\begin{example} \label{ex:CN73play} Let $\p = (3,5,9,14,11,6,15) \notin \P_{\CN{7}{3}}$. Performing the invariance reduction with vector $\z=\bm{1}$ subtracts the minimum value from all stacks and results in $\tilde{\p} = \p - 3 \cdot \bm{z} = (0,2,6,11,8,3,12)$. 
Applying zero reduction yields $\hat{\p} = (2,6,11,8,3,12) \notin \P_{H}$, which is the  position we started from in Example~\ref{ex:irp}. We obtained the move $\hat{\bm{m}} = (0,0,5,6,3,0)$ from $\hat{\p}$ to a position in $\P_{H}$. Lifting this move to the original game (by adding a zero for stack $a$) gives $\bm{m} = (0,0,0,5,6,3,0)$ and thus 
$\p' = \p - \bm{m} = (3,5,9,9,5,3,15) \in \P_{\CN{7}{3}}$.
    \end{example}

\section{The \P-positions of \CN{8}{3}}\label{sec:CN_8-3}
Before defining the losing set for \CN{8}{3}, we first introduce some general assumptions for an arbitrary 8-tuple $\p = (p_0,p_1, p_2, \ldots,p_7)$. Define the \emph{square minima}
$$
a = \min(p_0,p_2,p_4,p_6) \quad \text{and} \quad b = \min(p_1,p_3,p_5,p_7)$$ and assume without loss of generality that $a \leq b$. We will express stack heights on the two \emph{squares} $(p_0,p_2,p_4,p_6)$ and $(p_1,p_3,p_5,p_7)$ in relationship to their respective square minimum, that is,  $\p=(a,b+x_1,a+x_2,b+x_3,a+x_4,b+x_5,a+x_6,b+x_7)$. 

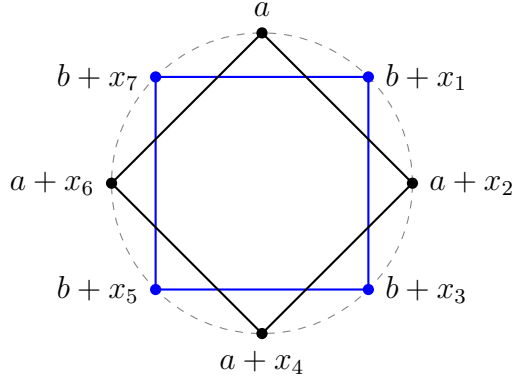
\begin{figure}[!h]
     \centering
     
     \begin{tikzpicture}[scale=2]
\draw[gray, dashed] (0,0) circle (1);
\foreach \i in {0,...,7} {
    \coordinate (P\i) at ({cos(90-45*\i)},{sin(90-45*\i)});
}

\draw[blue, thick] (P1) -- (P3) -- (P5) -- (P7) -- cycle;

\draw[black, thick] (P0) -- (P2) -- (P4) -- (P6) -- cycle;

\node[circle,fill=black,inner sep=1.5pt,label=above:\textbf{$a$}] at (P0) {};
\node[circle,fill=blue,inner sep=1.5pt,label=right:\textbf{$b+x_1$}] at (P1) {};
\node[circle,fill=black,inner sep=1.5pt,label=right:\textbf{$a+x_2$}] at (P2) {};
\node[circle,fill=blue,inner sep=1.5pt,label=right:\textbf{$b+x_3$}] at (P3) {};
\node[circle,fill=black,inner sep=1.5pt,label=below:\textbf{$a+x_4$}] at (P4) {};
\node[circle,fill=blue,inner sep=1.5pt,label=left:\textbf{$b+x_5$}] at (P5) {};
\node[circle,fill=black,inner sep=1.5pt,label=left:\textbf{$a+x_6$}] at (P6) {};
\node[circle,fill=blue,inner sep=1.5pt,label=left:\textbf{$b+x_7$}] at (P7) {};

\end{tikzpicture}
      \caption{Generic labeling of positions of \CN{8}{3}.}
     \label{fig:P(8-3)}
\end{figure}

As a result, the vectors $\z_1=(1,0,1,0,1,0,1,0)$ and $\z_2=(0,1,0,1,0,1,0,1)$ are invariant for any set of positions that are defined solely by a function of the excess $(x_1,x_2,\ldots, x_7)$, because subtraction of $c_i \z_i$ reduces the minimum in each square by the multiple $c_i$ without changing the excess values $x_1,
\ldots,x_7$. 
There are two possible relative locations of the square minima (and subsequently, the zeros of the invariance-reduced position $\tilde{\p}$), either adjacent or distance three apart. Without loss of generality, we assume that  $x_1=0$ for adjacent minima and that $x_3=0$ when there are minima at distance three apart. 

If the minima are adjacent, then $\tilde{\p}=(0,0,x_2,\dots, x_7)$, and zero reduction yields the game \PN{6}{3} with associated reduced position $\bar{\p}=(x_2,\dots, x_7)$. If the minima are at distance three apart, then $\tilde{\p}=(0,x_1,x_2,0,x_4,x_5,x_6, x_7)$ and zero reduction gives $G=\{\{1,2\},\{2,4\},\{4,5,6\},\{5,6,7\},\{7,1\}\}$. Now performing a merge reduction combining vertices $5$ and $6$ results in $\bar{G}=\{\{1,2\},\{2,4\},\{4,x\},\{x,7\},\{7,1\}\}\cong $ \CN{5}{2} with associated reduced position $\bar{\p}=(x_1,x_2,x_4,x_5+x_6,x_7)$. 
Since positions $\p$ and their associated reduced positions $\bar{\p}$ belong to the same class, we define two sets that will be relevant for the description of the \P-positions of \CN{8}{3}. For $\p \in $ \CN{8}{3}, let 
$$S_1=\{\p \mid x_1=0  \text{ and } \bar{\p}=(x_2,x_3,x_4,x_5,x_6,x_7)\in \P_{\PN{6}{3}}\}$$
and 
$$S_2=\{\p \mid x_3=0  \text{ and } \bar{\p}=(x_1,x_2,x_4,x_5+x_6,x_7)\in \P_{\CN{5}{2}}\}.$$
We first show that $S_1 \subset S_2$.

\begin{lemma} \label{lem:S1inS2} Let $\p \in $ \CN{8}{3}. Then $S_1 \subset S_2$.
    \end{lemma}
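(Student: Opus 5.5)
The plan is to unwind the definition of $S_1$ using Theorem~\ref{thm:P-pos path} and then exhibit, for each resulting pattern, a relabeling of the stacks that puts $\p$ into the normal form defining $S_2$. For this we use that \CN{8}{3} positions are only determined up to rotation and reflection. Such a relabeling must preserve the two squares, so the square minima $a\le b$ are unchanged. What changes is which index carries them and how the excesses are named.

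\textbf{Step 1: the three patterns of $S_1$.} For $\p\in S_1$ we have $x_1=0$, and $\bar{\p}=(x_2,\dots,x_7)\in\P_{6,3}$ for \PN{6}{3}. Since $3\ge\lceil 6/2\rceil$, Theorem~\ref{thm:P-pos path} applies with $\ell+m=4$ and $\ell,m\ge1$. Hence $\bar{\p}$ has one of three forms:
\begin{itemize}
\item[(i)] $x_3=x_4=0$ and $x_2=x_5+x_6+x_7$;
\item[(ii)] $x_4=x_5=0$ and $x_2+x_3=x_6+x_7$;
\item[(iii)] $x_5=x_6=0$ and $x_2+x_3+x_4=x_7$.
\end{itemize}
In each case one of the zero excesses sits on the $b$-square at distance three from $p_0$: at $p_3$ in (i) and (iii), and at $p_5$ in (ii). So each pattern has a pair of square minima at distance three, which is the configuration $S_2$ requires.

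\textbf{Step 2: case (i) directly.} Here $x_3=0$ already holds in the original labeling. Thus $\bar{\p}=(x_1,x_2,x_4,x_5+x_6,x_7)=(0,x_2,0,x_5+x_6,x_7)$. This lies in $\P_{5,2}$, read cyclically as $(M,m,c,d,m)$ with $M=x_2$ and $m=0$. Indeed $M+m=x_2=(x_5+x_6)+x_7=c+d$, and $M$ is the maximum because $x_2$ equals the sum of the other nonnegative entries.

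\textbf{Step 3: cases (ii) and (iii) by relabeling.} For (ii), apply the reflection $p_i\mapsto p_{-i}$, which fixes $p_0$ and sends $p_5$ to index $3$. The new excess vector is $(x_7,x_6,x_5,x_4,x_3,x_2,x_1)$, so the new third excess is $x_5=0$. The reduced \CN{5}{2} position is then $(x_7,x_6,x_4,x_3+x_2,x_1)=(x_7,x_6,0,x_2+x_3,0)$. Its maximum is $x_2+x_3=x_6+x_7$, flanked by the two zeros, so it lies in $\P_{5,2}$. For (iii), the original labeling again has $x_3$ playing the role of the zero at distance three. I will verify analogously that $(0,x_2,x_4,0,x_7)$ with $x_7=x_2+x_4+x_3$ fits the pattern, with $M=x_7$ and $m=0$ on its neighbours. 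If that fails, I will instead use the pair $p_6,p_1$, which are both square minima at distance three, and the corresponding rotation.

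\textbf{Main obstacle.} The hard part is the bookkeeping in case (iii) and in the degenerate subcases. I must make sure that the correct zero of the $b$-square is chosen, given that the normalization fixes $x_3=0$ while $x_1=0$ as well. I must also confirm that the \CN{5}{2} pattern holds, including the requirement $M=\max(\bar{\p})$ and the cyclic placement of the two $m$-entries around $M$. Where several zeros coincide, I expect any valid choice to work, because the equal-sum condition forces the largest entry to be the one opposite the two zeros.
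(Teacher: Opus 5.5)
Your direct-verification route works, but case (iii) is handled incorrectly as written. In (iii) you have $x_5=x_6=0$ and $x_7=x_2+x_3+x_4$, while $x_3$ is unconstrained. So your Step~1 claim that the distance-three zero sits at $p_3$ in (iii) is false. It sits at $p_5$ (paired with $p_0$), and $p_6,p_1$ form a second such pair.

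When $x_3>0$, the original labeling is therefore not in the normal form $x_3=0$ that $S_2$ requires, and the check you announce fails. The tuple $(0,x_2,x_4,0,x_7)$ has maximum $x_7$ flanked by two zeros. It lies in $\P_{5,2}$ only if $x_7=x_2+x_4$, which contradicts $x_7=x_2+x_3+x_4$.

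The simplest repair is the same reflection $p_i\mapsto p_{-i}$ you used in (ii). The excess vector becomes $(x_7,0,0,x_4,x_3,x_2,0)$ and the new third excess is $x_5=0$. The reduced position is $(x_7,0,x_4,x_2+x_3,0)$, whose maximum $x_7=x_2+x_3+x_4$ is flanked by zeros, so it lies in $\P_{5,2}$. Your fallback also works: rotating by two sends $p_6\mapsto p_0$ and $p_1\mapsto p_3$, giving $(x_7,0,x_2,x_3+x_4,0)$.

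With this correction, and with cases (i) and (ii) as you computed them (both correct), the argument is complete. Degenerate subcases need no separate treatment, because none of your checks uses strict positivity. Membership in $S_2$ also only needs one admissible normalization. Your closing expectation that \emph{every} valid choice works is true, but you have not proved it, and you do not need it.

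This differs from the paper's proof, which never checks the \CN{5}{2} equations. The paper uses Theorem~\ref{thm:P-pos path} only for the zero pattern: a pair of consecutive zeros in $(x_3,\dots,x_6)$ forces $x_3=0$ or $x_5=0$. Hence the reduced position lies simultaneously in the subspace with zeros at $p_0,p_1$ and in one with zeros at distance three. These subspaces are closed and congruent, via zero and merge reduction, to \PN{6}{3} and \CN{5}{2} respectively. Because \P-status in \CN{8}{3} is intrinsic to the position, the \P-verdict from \PN{6}{3} transfers to the \CN{5}{2} reduction by Remark~\ref{rem:zero red} and Lemma~\ref{lem:merge red}.

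That overlap argument needs no explicit description of $\P_{5,2}$ and gives agreement for every admissible normalization at once. It is the general principle the authors point out after Theorem~\ref{thm:P83}. Your route is purely algebraic and self-contained, and it spells out the reflection needed when $x_5=0$ rather than $x_3=0$, which the paper glosses over. Its cost is the case bookkeeping, which is exactly where your slip occurred.
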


\begin{proof} Since $\p \in S_1$, we have that $\bar{\p}=(x_2,x_3,x_4,x_5,x_6,x_7)\in \P_{\PN{6}{3}}$. By Theorem~\ref{thm:P-pos path}, $\bar{\p}$ contains a pair of consecutive zeros within $(x_3,x_4,x_5,x_6)$ and any such pair results in either $x_3=0$ or $x_5=0$. This implies  that $\p$ also has minima at distance three apart, so $\bar{\p}=(x_1,x_2,x_4,x_5+x_6,x_7)$ belongs to sub-game \CN{5}{2}. Since $\p \in S_1$, we know that it is a \P-position of \PN{6}{3}, and by Remark~\ref{rem:zero red}, $\p$ is a \P-position of \CN{8}{3}. Applying Remark~\ref{rem:zero red} and Lemma~\ref{lem:merge red} to $\p$ we obtain that  $\bar{\p}=(x_1,x_2,x_4,x_5+x_6,x_7)\in \P_{\CN{5}{2}}$, so $\p \in S_2$.    
\end{proof}

Before we state our result on the \P-positions of \CN{8}{3} in Theorem~\ref{thm:P83},  we need one additional lemma that will simplify the proof of that main result. Let $\p$ be a position of \CN{8}{3} with minima at distance three apart and associated reduced position $\bar{\p}=(y_1,y_2,y_3,y_4,y_5):=(x_1,x_2, x_4, x_5+x_6,x_7)$.  For a legal move $\p \rightarrow \p'$ with associated reduced position $\bar{\p}'=(y_1', y_2', y_3', y_4', y_5')$, we define the following two effects:
\begin{itemize}  
    \item {\bf Type A}: $(+d,+d,+d',\,\leq,\,\leq)$ where $0< d' \leq d$
    \item {\bf Type B}: $(+d,+d,\,\leq,\,\leq,\,\leq)$   
\end{itemize}
The effect describes how $y_i'$ compares to $y_i$: $+d$ indicates that $y_i'=y_i+d$ (and likewise for $d'$), while $\leq$ indicates that $y_i' \leq y_i$.  That is, under an effect of type A, 
\[
\bar{\p}'  = (y_1 + d, y_2 + d, y_3 + d', y_4', y_5'),
\]
and under an effect of type B, 
\[
\bar{\p}'  = (y_1 + d,\ y_2 + d,\ y_3',\ y_4',\ y_5').
\]
Keep in mind that the values $y_i$ and $y_i'$ are expressed in relationship to the current minimum in each of the squares of positions $\p$ and $\p'$, respectively, so the values $y_i'$ can indeed increase. Furthermore, the effect applies to \underline{all five stacks}, even though the move on $\p$  is on at most three consecutive stacks. We have the following lemma.

\begin{lemma} \label{lem:effects} For a position $\p \in S_2$ with $\bar{\p} = (y_1, y_2, y_3, y_4, y_5) \in \P_{\CN{5}{2}}$, if a move from $\p$ to $\p'$ results in either effect A or effect B for the associated reduced position, then  $\bar{\p}' \notin \P_{\CN{5}{2}}$.
\end{lemma}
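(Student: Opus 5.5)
The plan is to first reduce effect A to effect B, and then prove effect B by a structural argument on $\P_{\CN{5}{2}}$ from Theorem~\ref{thm:sumCN}. Throughout I assume $d>0$ in both effects, since the effect comes from a genuine move. I also use that $\bm{1}$ is invariant for $\P_{\CN{5}{2}}$ (Remark~\ref{rem:inv-spec}).

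\emph{Reducing A to B.} Let $\bar{\p}\in\P_{\CN{5}{2}}$ and suppose $\bar{\p}'$ has effect A. By invariance, $\bar{\bm{q}}:=\bar{\p}+d'\cdot\bm{1}\in\P_{\CN{5}{2}}$. Then
$$\bar{\p}'=\bar{\bm{q}}+(d-d',\,d-d',\,0,\,0,\,0)-(0,\,0,\,0,\,e_4,\,e_5),$$
where $e_4=y_4+d'-y_4'>0$ and $e_5=y_5+d'-y_5'>0$, because $d'>0$.

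If $d=d'$, then $\bar{\p}'$ is obtained from $\bar{\bm{q}}$ by removing tokens only from the two adjacent stacks $y_4,y_5$. This is a legal move in \CN{5}{2}, so $\bar{\p}'\notin\P_{\CN{5}{2}}$, because there is no move between \P-positions.

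If $d>d'$, then $\bar{\p}'$ arises from the \P-position $\bar{\bm{q}}$ by an effect of type B with parameter $d-d'>0$. So effect A follows from effect B.

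\emph{Effect B.} We must show the following. If $\bm{q}\in\P_{\CN{5}{2}}$ and $\bm{r}=\bm{q}+(d,d,0,0,0)-(0,0,e_3,e_4,e_5)$ with $d>0$ and $e_i\ge 0$, then $\bm{r}\notin\P_{\CN{5}{2}}$. I would argue by contradiction, assuming both $\bm{q}$ and $\bm{r}$ lie in $\P_{\CN{5}{2}}$. I would write each in the normal form $(M,m,c,\delta,m)$ up to rotation and reflection, with $M=\max$, the two copies of $m$ flanking the pair $(c,\delta)$, and $M+m=c+\delta$.

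I would extract two facts from this form. First, the total is $s=2(M+m)+m$. Second, the minimum $m$ occupies two non-adjacent stacks, namely the two neighbours of the maximum's "opposite pair". Passing from $\bm{q}$ to $\bm{r}$, the adjacent stacks $1,2$ both grow by exactly $d$, and stacks $3,4,5$ can only shrink.

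The case split is on where the maximum and the two minima of $\bm{q}$ sit relative to the adjacent pair $\{1,2\}$. There are five rotations times a reflection, but the symmetry $1\leftrightarrow 2$, $3\leftrightarrow 5$ cuts this down to about three placements. In each case I then ask where the maximum and minima of $\bm{r}$ can be.

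Since stacks $1,2$ both increase by the same amount $d$, they cannot both be the minimum pair of $\bm{r}$, as the two minima are never adjacent. If one of them is a minimum of $\bm{r}$, its height exceeds the corresponding entry of $\bm{q}$ by $d$. In each subcase I would compare the relations $M+m=c+\delta$ for $\bm{q}$ and $M'+m'=c'+\delta'$ for $\bm{r}$. I expect the increase of $2d$ on one side to be incompatible with the nonincrease on stacks $3,4,5$: typically one gets $2d\le -(e_i+e_j)\le 0$, or a minimum condition $m'\le$ entries is violated.

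\emph{Main obstacle.} I expect the main difficulty to be this bookkeeping in the effect-B case analysis. The maximum of $\bm{r}$ may move to stack $1$ or $2$, and the roles of the minima can change between $\bm{q}$ and $\bm{r}$. If the direct case split becomes unwieldy, my first fallback is to use the linear description: $\P_{\CN{5}{2}}$ is the set where the quantity $q_i+q_{i+1}-q_{i+2}-q_{i+3}$ vanishes for the appropriate $i$, with the minimality constraints. I would then track how each such quantity changes under $(+d,+d,\le,\le,\le)$; every quantity involving both stacks $1,2$ on the same side strictly increases.
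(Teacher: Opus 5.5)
Your reduction of effect A to effect B is correct, and it is organised differently from the paper. The paper runs a single five-way case split on the location of $\max(\bar{\p})$ and treats both effects inside each case. Shifting by $d'\cdot\bm{1}$ shows that $\bar{\p}'$ arises from the \P-position $\bar{\p}+d'\cdot\bm{1}$ in one of two ways. If $d'=d$, it is reached by a legal \CN{5}{2} move on the adjacent stacks $4,5$, since $e_4,e_5\ge d'>0$. If $d'<d$, it arises by an effect B with parameter $d-d'>0$. This removes all of the paper's effect-A bookkeeping at once.

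The gap is effect B itself, which carries the whole content of the lemma. You announce a case analysis and say what you expect it to give, but you carry out no case. Your fallback would not close it as stated, for two reasons.

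First, the \P-condition for $\bm{r}$ must hold at the stack where $\bm{r}$ has its maximum with equal neighbours. A priori this need not be the stack where $\bm{q}$ does. So tracking the change of ``the appropriate'' quantity $q_i+q_{i+1}-q_{i+2}-q_{i+3}$ proves nothing until the locations are pinned down.

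Second, at locations $3$ or $5$ the sum condition can survive. For example, take $\bm{q}=(0,0,2,0,2)\in\P_{5,2}$ with $d=e_5=1$ and $e_3=e_4=0$. This gives $\bm{r}=(1,1,2,0,1)$, whose maximum is at stack $3$ and which satisfies $r_3+r_4=r_5+r_1$. It fails only because $r_2\neq r_4$.

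The missing idea is to localise the maximum.
\begin{itemize}
\item In a \P-position $(M,m,c,\delta,m)$ the two entries flanking $M$ equal the minimum and are non-adjacent. So they cannot both lie in $\{1,2\}$, and hence $\min(q_3,q_4,q_5)=\min(\bm{q})$.
\item Therefore $\min(\bm{r})\le\min(\bm{q})<\min(r_1,r_2)$. So if $\bm{r}\in\P_{5,2}$, neither stack $1$ nor stack $2$ can flank its maximum. This forces the maximum of $\bm{r}$ to stack $4$, with $r_3=r_5$ and $r_4+r_3=r_1+r_2$.
\item Then $q_4\ge r_4\ge r_1>q_1$, and likewise $q_4>q_2$. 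So the maximum of $\bm{q}$ is not at stack $1$ or $2$. It is not at stack $3$ or $5$ either, since that would force $q_4=\min(\bm{q})\le q_1$.
\item Thus the maximum of $\bm{q}$ is at stack $4$ and $q_4+q_3=q_1+q_2$. Hence $r_4+r_3\le q_4+q_3=q_1+q_2<r_1+r_2$, a contradiction.
\end{itemize}
With this step added, your route gives a complete proof, noticeably shorter than the paper's.
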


\begin{proof}
 Let $M=\max(\bar{\p})$, $M'=\max(\bar{\p}')$, and $\P_{5,2}=\P_{\CN{5}{2}}=\{(M,m,c,d,m) \mid  M+m=c+d\}$. Assume that $\bar{\p} \in \P_{5,2}$ and that the move $\p \rightarrow \p'$  has either effect A or B on $\bar{\p}'$. To show that $\bar{\p}' \notin \P_{5,2}$, we consider each possible location of the maximal stack $M$ of $\bar{\p}$ and show in each sub-case that one of the conditions of $\P_{5,2}$ is not satisfied for $\bar{\p}'$. Typically, the sum condition $M+m=c+d$ is not satisfied. 
 \begin{itemize}
     \item $y_1=M$: Under either effect, $y_1'=M'$. Then 
    $y_1' + y_2' = y_1 + y_2 + 2d > y_3 + y_4 +d \geq y_3' + y_4'$, so the sum condition does not hold.

    \item $y_2=M$: Under either effect, $y_2'=M'$ and  $ y_1' + y_2' = y_1+y_2+2d =y_4 + y_5 + 2d >y_4' + y_5'$, thus  $\bar{\p}' \notin P_{5,2}$.  
    \item $y_3=M$: Since \( \bar{\p} \in \P_{5,2} \), we have \( y_2 = y_4 = m \) and \( y_1 + y_5 = y_3 + y_4 \). Under either effect,  \( M' \) cannot occur at \( y_2' \) or \( y_4' \), since \( y_2' \leq y_1' \) and \( y_4' < y_1' \). We consider the other possibilities.
\begin{itemize}
     \item[-] If \( M' = y_1' \), then we require $y_1' + y_2' = y_3' + y_4'$, but
    $y_1' + y_2' \geq y_3' + y_2' = y_3' + y_2+d=y_3' + y_4+d>y_3' + y_4'$.
    \item[-]  If \( M' = y_3' \), then we must have \( y_2' = y_4' = m' \), but \( y_2' = y_2 + d > y_4 \geq y_4' \).
    \item[-] If \( M' = y_5' \), then \( y_1' = y_4' = m' \) must hold, but \( y_1' = y_1 + d\geq m+d > y_4 \geq y_4' \).   
   \end{itemize}
   \item $y_4=M$: Since \( \bar{\p} \in \P_{5,2} \), we have \( y_3 = y_5 = m \) and \( y_1 + y_2 = y_4 + y_5= y_4 +y_3\). After either type of effect, $y_5'$ cannot attain the new maximum \( M' \), because $y_5' \leq y_5 \leq y_1 <y_1+d=y_1'$. Also, $y_3'$ cannot be a unique new maximum because $y_3' \leq y_3+d' \leq y_1+d=y_1'$.  We now consider the other possibilities: 
\begin{itemize} 
    \item[-] If \( M' = y_1' \) or \( M' = y_4' \), then $y_1'+y_2'=y_1+y_2+2d > y_3+y_4+d \geq y_3'+y_4'$, so the required equality $y_1' + y_2' = y_3' + y_4'$ does not hold. 
    \item[-] If \( M' = y_2' \), then $y_1' + y_2' = y_1 + y_2 + 2d = y_4 + y_5 + 2d >  y_4'+ y_5'$,
so the needed equality fails.
\end{itemize}
    \item $y_5=M$: Since \( \bar{\p} \in \P_{5,2} \), we have that \( y_1 = y_4 = m \) and \( y_2 + y_3 = y_5 + m \). The effect of type~B is symmetric to the case \( y_3 = M \) (via reflection). Under effect A,  \( y_4' \) cannot attain the new maximum \( M' \) because  \( y_4' < y_4 + d \leq y_2 + d = y_2' \). Also, $y_1'$ cannot be a unique maximum since $y_1' \leq y_2'$.  We consider the remaining possibilities for \( M' \):
\begin{itemize}
 \item[-] If \( M' = y_2' \) or \( M' = y_5' \), then the required condition is
    \(
    y_5' + y_4' = y_2' + y_3'.
    \)
    However, $y_5' + y_4' \leq y_5 + y_4 = y_2 + y_3 < y_2 + y_3 + d + d' = y_2' + y_3'$.
    \item[-] If  \( M' = y_3' \), then we need  $y_4' = y_2'$, but 
$y_4' \leq y_4 \leq y_2<y_2+d=y_2'$.
\end{itemize}
 \end{itemize}
 Therefore, in all cases,  \( \bar{\p}' \notin \P_{5,2} \).
\end{proof}

We are now ready to state our main result.

\begin{theorem} \label{thm:P83} Let $\p$  be a position of \CN{8}{3}. Then $$\P_{8,3}=\{\p \mid x_3=0  \text{ and } \bar{\p}=(x_1,x_2,x_4,x_5+x_6,x_7)\in \P_{\CN{5}{2}}\}.$$    
\end{theorem}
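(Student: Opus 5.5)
The proof has the usual two parts: there is no move between positions of $S_2$, and from every position outside $S_2$ there is a move into $S_2$. By Lemma~\ref{lem:S1inS2} we have $S_1\subset S_2$, so we can use $S_2$ alone as the candidate set.

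\textbf{Moves into $S_2$.} We start with the existence of a move from $\p\notin S_2$ to $S_2$, since the \IRP{} handles this cleanly. $S_2$ is defined only through the excess vector $(x_1,\dots,x_7)$, so $\z_1$ and $\z_2$ are invariant for $S_2$. By Lemma~\ref{lem:inv} we may therefore replace $\p$ by $\tilde{\p}=\p-a\z_1-b\z_2$, which has a zero in each square. Up to rotation and reflection there are two subspaces.

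If the zeros are at distance three apart ($x_3=0$), zero and merge reduction give \CN{5}{2}. Since $\bar{\p}\notin\P_{5,2}$, Theorem~\ref{thm:sumCN} supplies a winning move. We lift it back, distributing tokens over the merged stacks $5,6$ as in step 3 of the \IRP. The resulting $\p'$ keeps its zeros at distance three, so $\p'\in S_2$.

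If the zeros are adjacent ($x_1=0$), zero reduction gives \PN{6}{3}. Theorem~\ref{thm:P-pos path} supplies a move to a \P-position of \PN{6}{3}, that is, to a position of $S_1\subset S_2$.

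One subtlety: a position may lie in both subspaces, for example with several zeros. This causes no trouble, because we only need some move into $S_2$, and either case supplies one.

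\textbf{No move between positions of $S_2$.} Let $\p\in S_2$, normalized so that $p_0=a\le b=p_3$ are the square minima, and let $\p\to\p'$ be a move on three consecutive stacks. Such a move touches at most two stacks of one square and one stack of the other. We split according to whether the square minima of $\p'$ remain at the same locations.

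\emph{Same locations.} Then $\p'$ lies in the same closed subspace, $\bar{\p}\to\bar{\p}'$ is a legal move in \CN{5}{2} by the merge correspondence, and Theorem~\ref{thm:sumCN} rules out $\bar{\p}'\in\P_{5,2}$.

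\emph{A minimum moves.} The new minimum is smaller than the old one, so relative to the new minima the excesses of the untouched stacks in that square increase by the amount $d$ by which the minimum dropped. The excesses of the other square increase by some $d'\le d$, or stay $\le$, depending on whether its minimum also moves. The plan is to enumerate the possible new configurations up to the symmetry of the octagon: new minima adjacent, or at distance three in a rotated or reflected frame. In each case we rewrite $\bar{\p}'$ in its own frame and check that the change from $\bar{\p}$ is exactly of type A or type B. Lemma~\ref{lem:effects} then gives $\bar{\p}'\notin\P_{5,2}$.

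When the new minima are adjacent, $\p'\in S_2$ requires $\p'\in S_1$ with its $x$-values forced. We handle this either by showing that $\p'$ then also has a representation with minima at distance three, and applying Lemma~\ref{lem:effects} again, or by a direct \PN{6}{3} check using the two-sided sum equality of Theorem~\ref{thm:P-pos path}.

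\textbf{Main obstacle.} The hard part is this second half: carrying out the case bookkeeping that matches every minimum-changing move to effect A or effect B. This means tracking which of the five reduced coordinates gain $+d$ or $+d'$, including the merged coordinate $y_4=x_5+x_6$, across all placements of the three-stack move relative to the two squares. I would organize the cases by the position of the move window (which stacks among $p_0,\dots,p_7$ it covers) and by which square's minimum decreases. For any residual configuration not covered by the lemma, I would fall back on direct inequalities from the condition $M+m=c+d$, as in the proof of Theorem~\ref{thm:P7-3}.
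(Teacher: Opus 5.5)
\textbf{Verdict: there is a genuine gap.} Your first half is fine. It is also more explicit than the paper's, which only lifts a \CN{5}{2} move for minima at distance three, whereas you also treat adjacent minima via \PN{6}{3} and $S_1\subset S_2$.

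The gap is the second half. There you only announce a plan: enumerate the minimum-changing moves and check that each is an A- or B-effect. That enumeration is the entire content of the paper's argument, and you supply none of it. The paper proceeds as follows.
\begin{itemize}
\item A move on three consecutive stacks cannot create new minima of both squares at distance three. If both square minima strictly drop, all minimal stacks of $\p'$ lie in the window, so every cross-square pair of minima is adjacent. Hence only one new minimum $b'=b-d$ occurs.
\item By symmetry it reduces to four windows placed relative to the minima at $p_0,p_3$, and computes $\bar{\p}'$ explicitly in each. This includes the re-merging when the minimum relocates to $p_5$, where $\{p_2,p_3\}$ rather than $\{p_5,p_6\}$ becomes the merged pair.
\item It then verifies an A- or B-effect (cases 1 and 3). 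In cases 2 and 4 it instead uses that $\bar{\p}+d\bm{1}\in\P_{5,2}$ has a legal \CN{5}{2} move to $\bar{\p}'$.
\end{itemize}
Your fallback to ``direct inequalities'' for residual configurations is also left unspecified.

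Three features of your sketch would misdirect the enumeration.
\begin{itemize}
\item \emph{Wrong split.} You split by whether the minima keep their \emph{locations}. Lowering a minimal stack in place, e.g.\ $p_3\to b-d$ via the window $\{p_2,p_3,p_4\}$, keeps both locations. Yet it yields $\bar{\p}'=(x_1+d,x_2',x_4',x_5+x_6+d,x_7+d)$, which is not a \CN{5}{2} option of $\bar{\p}$. So your ``same locations'' argument is false there. The correct dichotomy is whether the minimum \emph{values} $a,b$ survive. These in-place lowerings are precisely the paper's cases 1, 2 and the first part of case 3.
\item \emph{Source of $d'$.} The $d'$ in effect A does not come from the other square. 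If that square's minimum strictly drops, $\p'\notin S_2$ as noted above; otherwise its unplayed excesses are unchanged. Instead, $d'$ comes from a played stack of the same square whose excess rises by some $d'\in(0,d]$, e.g.\ $x_1<x_1'\le x_1+d$ in the window $\{p_1,p_2,p_3\}$. It can also come from the old minimal stack being left at $b'+d'$ when the minimum relocates to $p_5$.
\item \emph{Adjacent new minima.} Your special handling of this case is unnecessary. If $\p'$ has no pair of minima at distance three, it lies outside $S_2$ by definition.
\end{itemize}
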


\begin{proof}  By Lemma~\ref{lem:S1inS2}, we only need to consider \P-positions of \CN{8}{3} that have minima at distance three apart, that is, positions represented by $S_2$. First we prove that there is always a move from $\p \notin S_2$ to an option $\p' \in S_2$. For a position in \CN{8}{3} with zeros at distance three apart, the reduced position $\bar{\p}$ is in \CN{5}{2}, and since $\p \notin S_2$, $\bar{\p} \notin \P_{\CN{5}{2}}$. Thus there exists a move $\bar{\bm{m}}$ to  an option $\bar{\p}' \in \P_{5,2}$ and  an associated move $\bm{m}$ from $\p$ to $\p' \in S_2$. 

Next, we prove that there is no move from  $ S_2$ to $ S_2$.  We first consider moves on $\p \in S_2$ in which \(a\) and \(b\) continue to be the minima of their respective squares. Let $\bm{m}$ denote the move from $\p$ to $\p'$. Since the location of the minima does not change, $\p$ and $\p'$ are in the same subspace after the reduction, namely in game \CN{5}{2}, and there is a unique corresponding move $\bar{\bm{m}}$  from $\bar{\p}$ to $\bar{\p}'$. Since $\bar{\p} \in \P_{5,2}$, we have $\bar{\p}' \notin \P_{5,2}$, and consequently, $\p' \notin S_2$. 

There are also no moves that would create two new minima at distance three apart, since we can only play on three consecutive stacks in \CN{8}{3}. That leaves moves where one new minimal stack is created.  Due to rotational symmetry in the game play of \CN{8}{3}, we only need to consider the following four representative types of move sets on $\p$:  
\begin{enumerate}
    \item \(\{b + x_1, a + x_2, b\}\) (similarly for \(\{a, b + x_1, a + x_2\})\), where play is on one of the minimal stacks and the stacks between the two minimal stacks. 
    \item \(\{a + x_2, b, a + x_4\}\) (similarly for \(\{b + x_7, a,  b + x_1\})\),  where play is one minimal stack and the two adjacent stacks. 
     \item \(\{b, a + x_4, b + x_5\}\) (similarly for \(\{a + x_6, b + x_7, a\})\) where play is on one of the minimal stacks and on two subsequent stacks, none of which is adjacent to the other minimal stack.
    \item \(\{a + x_4, b + x_5, a + x_6\}\) (similarly  for \(\{b + x_5, a + x_6, b+x_7\})\), where play is on neither minimal stack.   
\end{enumerate}
Note that we do not distinguish which is the smaller minimum value in this analysis, and assume, without loss of generality, that the move creates a new minimum value $b'$. Using the fact that any position with adjacent minima also has minima at distance three, we only need to consider those moves that do not create adjacent minima without minima at distance three also. We define $d=b-b'$ and express the option $\p'$ in terms of the new minimum $b'$,  replacing $b$ by $b'+d$ for unplayed stacks. Recall that   
$\p = (a,\, b + x_1,\, a + x_2,\, b,\, a + x_4,\, b + x_5,\, a + x_6,\, b + x_7)$ with  associated reduced position $\bar{\p} = (x_1,\, x_2,\,  x_4, \, x_5 + x_6, \, x_7) \in \P_{5,2}.$
\begin{itemize}
\item[1.] The only moves to consider are of the form \(\{b + x_1,\, a + x_2,\, b\} \rightarrow \{b' + x_1',\, a + x_2',\, b'\}\), where $b' < b$, $0< x_1' \leq x_1+d$, and $0< x_2' \leq x_2$. The resulting position is
$$
\p' = (a,\, b' + x_1',\, a + x_2',\, b',\, a + x_4,\, b' + d + x_5,\, a + x_6,\, b' + d + x_7),
$$
with an associated reduced position 
$
\bar{\p}' = (x_1',\, x_2',\, x_4,\, x_5 + x_6 + d,\, x_7 + d),
$
which (after rotation) displays either a type B effect $(+d,+d,\leq,\leq,\leq)$ if $0<x_1'\leq x_1$ or a type A effect $(+d,+d,+d',\leq,\leq)$ if $x_1<x_1'\leq x_1+d$. In either case, Lemma~\ref{lem:effects} implies that $\bar{\p}' \notin \P_{5,2}$.

\item[2.] Here the only moves to be considered are  
$\{a + x_2, b, a + x_4\} \rightarrow \{a + x_2',\, b',\, a + x_4'\}$
with $b'<b$, $0< x_2' \leq x_2$, and $0< x_4' \leq x_4$. The move leads to  
$$\p' = (a,\, b' + d + x_1,\, a + x_2',\, b',\, a + x_4',\, b' + d + x_5,\, a + x_6,\, b' + d + x_7)$$
with  $\bar{\p}' = (x_1+d,\, x_2',\, x_4',\, x_5 + x_6+d,\, x_7+d).$ Since $\z=\bm{1}$ is  invariant for \(\P_{5,2}\) (see Example~\ref{ex:Ppos52}), we have that
\[
\bar{\p} + d \cdot \z = (x_1 + d,\, x_2 + d,\, x_4 + d,\, x_5 + x_6 + d,\, x_7 + d)
\]  
also belongs to \(\P_{5,2}\). It is easy to verify that there is a legal move  $\bar{\p} + d \cdot \z \rightarrow \bar{\p}'$ in the game play of \CN{5}{2}, thus  \(\bar{\p}' \notin \P_{5,2}\), which implies that  $\p' \notin S_2$.

\item[3.] 
Here we need to consider two types of moves, depending on whether the location of the $b$ minimum remains the same or not. In the first case, the moves are of the form  $\{b,\, a + x_4,\, b + x_5\} \rightarrow \{b',\, a + x_4',\, b' + x_5'\}$, where $b' < b$, $0< x_4' \leq x_4$, and $0< x_5' \leq x_5+d$. 
The resulting position is
$$
\p' = (a,\, b' + d + x_1,\, a + x_2,\, b',\, a + x_4',\, b' + x_5',\, a + x_6,\, b' + d + x_7),
$$
with an associated reduced position 
$
\bar{\p}' = (x_1 + d,\, x_2,\, x_4',\, x_5' + x_6,\, x_7 + d), 
$
which (after rotation and reflection) displays either a type B effect (if $0<x_5'\leq x_5$) or a type A effect (if $x_5<x_5'\leq x_5+d$).

In the second case, when the minimum changes location, then the moves are of the form $\{b,\, a + x_4,\, b + x_5\} \rightarrow \{b'+d',\, a + x_4',\, b' \}$, where  $0< d' \leq d$, $0< x_4' \leq x_4$, and $b' < b$. 
The resulting position is
$$
\p' = (a,\, b' + d + x_1,\, a + x_2,\, b'+d',\, a + x_4',\, b',\, a + x_6,\, b' + d + x_7),
$$
with an associated reduced position 
$
\bar{\p}' = (x_1 + d,\, x_2+d',\, x_4',\,  x_6,\, x_7 + d),
$
because the minima are now at the $p_0'$ and $p_5'$ locations, so stacks 2 and 3 get merged. This represents a Type A effect (after rotation).  By Lemma~\ref{lem:effects}, in either case, we have $\bar{\p}' \notin \P_{5,2}$.
\item[4.] For this case, we only need to consider moves of the form  $
\{a + x_4, b + x_5, a + x_6\} \rightarrow \{a + x_4',\, b',\, a + x_6'\}
$ 
with $b' < b$, $0< x_4' \leq x_4$, and $0< x_6' \leq x_6$ resulting in  
$$
\p' = (a,\, b' + d + x_1,\, a + x_2,\, b' + d,\, a + x_4',\, b',\, a + x_6',\, b' + d + x_7).
$$
with an associated reduced position 
$
\bar{\p}' = (x_1 + d,\, x_2 + d,\, x_4',\,  x_6',\,x_7 + d).
$
Note that the location of the new minimum $b'$ differs from the location of the minimum $b$, so different stacks get combined in the reduced position. The remainder of the argument follows as in case 2 moves. 
\end{itemize}
This completes the proof that there are no moves from $S_2$ to $S_2$, so $S_2 $ represents the \P-positions of the game \CN{8}{3}.
\end{proof}

Before we compare our results to those for simplicial complexes and discuss open questions, we want to remark that Lemma~\ref{lem:S1inS2} is a special case of a broader result, namely that when two sub-games overlap, then the positions in the overlap either have to be both \P-positions or both \N-positions of the overall game, using the same argument as  in Lemma~\ref{lem:S1inS2}. This fact is particularly useful in cases where one subspace is contained in another, because it cuts down on the cases to be considered.

\section{Detour into Simplicial Complexes} \label{sec:Simp}

 The key ingredient in our proofs was the use of invariant vectors. There is some overlap between invariant vectors and the circuits of simplicial simplexes, and some of our results on the structure of the \P-positions resemble those for certain types of circuits. We will use this section to point out the similarities and the differences. One main advantage of invariant vectors is that they can be used for \ruleset{CircularNim} games with larger values of $n$ and $k$, for which many of the results of \cite{ES1996} no longer apply. We will start with some definitions and results and provide examples where invariant vectors shine. 

\begin{definition} \label{def:simpcomp}
 A simplicial complex $\Delta$ on a finite set $V$ is a collection of subsets of $V$ such that
 \begin{enumerate}
     \item $\{v\} \in \Delta$ for all $v\in V$.
     \item If $F \in \Delta$ and $G \subseteq F$, then $G \in \Delta$.
 \end{enumerate}
The members of $\Delta$ are called \emph{simplices} or \emph{faces}, and the elements of $V$ are called \emph{vertices}. A maximal face with respect to inclusion is called a \emph{facet}. A set $C \subseteq V$ is called a \emph{circuit of $\Delta$} if  $C$ is not a face of $\Delta$, but every proper subset of $C$ is a face, that is, $C$ is a minimal non-face. 
\end{definition}

 In the context of simplicial complexes, the game \ruleset{SimplicialNim} is defined as follows.
 
\begin{definition}
    The game \ruleset{SimplicialNim} SimNim$(\Delta)$ is played on a simplicial complex $\Delta$. On each vertex of the complex, place a stack of tokens. In each move, a player removes any (or all) tokens from each of the stacks that form a face of the complex, but must remove at least one token from one of the stacks.  The last player to move wins. 
\end{definition}

Note that this game is identical to \SN{n}{\A}, where $\A$ is the set of facets of $\Delta$. Also, in the context of \ruleset{SetNim}, a circuit $C$ of a simplicial complex is just a  minimal non-playable set. Taking away any one of the vertices of $C$ produces a valid move set of \SN{n}{\A}. Now let $\bm{e}(v)$ denote the $v^{\text{th}}$ unit vector and let  $\bm{e}(A)=\sum_{v\in A} \bm{e}(v)$ for $A \subseteq V$. Then there is the following result. 

\begin{lemma} [Lemma 2~\cite{ES1996}]
If $C$ is a circuit of the simplicial complex, then $n\cdot\bm{e}(C)$ is a zero position\footnote{\P-position} for all $n\in \mathbb{N}$.
    \end{lemma}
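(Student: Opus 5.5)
The plan is to reduce the game on positions supported on $C$ to \Nim\ on two stacks. I will show directly that $\p=n\cdot\bm{e}(C)$, the position with $n$ tokens on each vertex of $C$ and $0$ elsewhere, is a \P-position. First I apply a zero reduction (Remark~\ref{rem:zero red}) to the vertices outside $C$. In the reduced game on vertex set $C$, a set of stacks is playable exactly when it is a face of $\Delta$ contained in $C$. Since $C$ is a circuit, these are precisely the proper subsets of $C$. So the sub-game is \SN{|C|}{\A_C}, where $\A_C=\{C-\{v\}\mid v\in C\}$ consists of all $(|C|-1)$-element subsets of $C$. This is Moore's $(|C|-1)$-Nim on $|C|$ stacks, and the claim becomes that the all-$n$ position is a \P-position of this game. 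The case $|C|=1$ cannot occur, because singletons are faces.

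Next I identify the \P-positions of the reduced game. Put $r=|C|$. I claim they are exactly the positions with all stacks equal, $\{(x,\dots,x)\mid x\ge 0\}$. I verify the two defining properties.

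\emph{No move between equal positions.} A move must lower at least one stack and leaves at least one stack untouched, because at most $r-1$ stacks may be played. Hence the resulting stacks cannot all be equal at a common value below $x$, and they cannot all still equal $x$.

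\emph{A move from every unequal position.} Let $m=\min$ of the stacks, attained at some vertex $v$. Reduce every other stack to $m$. This uses only the stacks in $C-\{v\}$, which is a legal move set, and it removes at least one token because the position is not constant.

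Together these show that the set of constant positions is exactly $\P$ of the reduced game.

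Finally, Remark~\ref{rem:zero red} transfers the classification back: $n\cdot\bm{e}(C)$ is a \P-position of SimNim$(\Delta)$ for every $n$. As an alternative route to the same reduction, when $r\ge 3$ one could merge $r-2$ of the vertices of $C$ to land in \CN{3}{2} and use Theorem~\ref{thm:sumCN}. However, the merge condition fails for the sets $C-\{v\}$, so the direct argument above is cleaner.

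The only step requiring care is the identification of the zero-reduced move sets. Every facet $A$ of $\Delta$ satisfies $A\cap C\subsetneq C$, and every proper subset of $C$ is a face and hence lies in some facet. So the restricted move sets are exactly the subsets of the sets $C-\{v\}$, with no smaller or larger playable sets.
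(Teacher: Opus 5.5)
Your proof is correct. The paper gives no proof of this lemma; it quotes it from \cite{ES1996}. So there is no in-paper argument to compare against, and what you give is a self-contained proof written in the paper's \ruleset{SetNim} language.

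The essential step is handled properly. Because $C$ is a non-face all of whose proper subsets are faces, the restricted maximal move sets are exactly the sets $C\setminus\{v\}$. This makes the sub-game Moore's $(r-1)$-Nim on $r=|C|$ stacks, and your two-property verification that the constant positions form its \P-set is sound. You could also have cited this: that sub-game is \CN{r}{r-1}, since any $r-1$ of $r$ cyclically arranged stacks are consecutive. The paper notes in Section~\ref{sec:future} that the \P-positions of \CN{r}{r-1} are exactly the equal-height positions.

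The zero reduction is a convenience rather than a necessity. The same equalizing strategy can be run directly in $\Delta$ by induction on $n$. Any move from $n\cdot\bm{e}(C)$ must leave some vertex of $C$ untouched, because $C$ is not a face. The reply that lowers the other vertices of $C$ to the new minimum, attained at some $v$, uses the face $C\setminus\{v\}$. It lands on $m\cdot\bm{e}(C)$ with $m<n$. Your packaging makes this strategy visible as a known sub-game, which fits the paper's reduction philosophy.

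Drop the closing aside about merging $r-2$ vertices of $C$ to reach \CN{3}{2}. As you yourself note, the merge condition fails for $r\ge 4$. For $r=3$ the sub-game already is \CN{3}{2} and no merge is needed. So that ``alternative route'' does not exist, and the sentence reads as self-contradictory.
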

    
We do have a corresponding result for invariant vectors, namely that every linear combination of invariant vectors of $\P$ are \P-positions, which follows immediately from the definition of invariance. So what is the difference between invariant vectors and circuits? The following two examples show the difference.  While for some games the set of circuits and the set of invariant vectors can be the same, they typically are not. 

\begin{example} \label{ex:circ=inv} In \CN{6}{3}, the invariant vectors are $\z_1=(1,0,0,1,0,0)$, $\z_2=(0,1,0,0,1,0)$, $\z_3=(0,0,1,0,0,1)$, $\z_4=(1,0,1,0,1,0)$, and $\z_5=(0,1,0,1,0,1)$. Taking a vertex away from the set indexed by $\z_1, \z_2$, and $\z_3$, respectively leaves a single vertex, which is always playable. Taking a vertex away from the sets indexed by either $\z_4$ or $\z_5$ leaves two vertices at distance three, which are playable in \CN{6}{3}. Thus the sets indexed by the invariant vectors are circuits, and in this case, these are exactly the circuits of the complex, so the two sets agree.  
\end{example}

\begin{example} \label{ex:circ_n=_inv} In \CN{7}{3}, the only invariant vector is $\z=\bm{1}$. The set indexed by $\z$ cannot be a circuit, because removing a vertex would allow play on six stacks, so does not represent a move set.  
\end{example}

So far we have seen that the \P-positions contain  the set of linear combinations resulting from either circuits or invariant vectors. However, in the context of simplicial complexes, there is stronger result on the structure of \P-positions based on pointed circuits. 

\begin{definition} A circuit $C$ of $\Delta$ is \emph{pointed} if it has a vertex that does not belong to any other circuit of $\Delta$. Such a vertex is called a \emph{point of} $C$, and $C$ is called \emph{pointed by $v$}. If every circuit of $\Delta$ is pointed, then $\Delta$ is called a \emph{pointed circuit complex}. 
\end{definition}

\begin{theorem} [Theorem 3.2~\cite{ES1996}] \label{thm:pointcirc}
Let $\Delta$ be a pointed circuit complex and let $\mathcal{C}$ be the collection of circuits of $\Delta$. Then the zero positions of $\Delta$ are those of the form 
$$\bm{m}=\sum_{C\in \mathcal{C}}a_C\cdot \bm{e}(C),$$
where $a_C$ is a non-negative integer for each $C$.
\end{theorem}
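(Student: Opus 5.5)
The plan is to verify directly that the set
$$S=\Big\{\sum_{C\in\mathcal{C}}a_C\,\bm{e}(C)\ \Big|\ a_C\in\mathbb{N}_0\Big\}$$
has the two defining properties of the set of \P-positions. These are: there is no legal move between two elements of $S$, and from every position not in $S$ there is a legal move into $S$. Throughout I use the \ruleset{SetNim} formulation. A move $\p\rightarrow\p'$ is legal exactly when $\p-\p'\ge\bm{0}$, $\p-\p'\neq\bm{0}$, and the support of $\p-\p'$ is a face of $\Delta$.

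The basic combinatorial fact I will use is the following. A set $U\subseteq V$ that contains no circuit is a face. Indeed, if $U$ were a non-face, a minimal non-face inside $U$ would be a circuit. Since $\Delta$ is pointed, I fix for each circuit $C$ a point $v_C$. The map $C\mapsto v_C$ is injective, and $v_C$ belongs to no circuit other than $C$. Consequently, for $\p=\sum a_C\bm{e}(C)\in S$, the coordinate of $\p$ at $v_C$ equals $a_C$. So the representation of an element of $S$ is unique, with $a_C=p_{v_C}$.

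\textbf{No move from $S$ to $S$.} Let $\p=\sum a_C\bm{e}(C)$ and $\p'=\sum a'_C\bm{e}(C)$ be in $S$, and suppose $\p\rightarrow\p'$ is a legal move on a face $F$. Set $D_C=a_C-a'_C$, so that $\p-\p'=\sum D_C\bm{e}(C)$. Evaluating at the point $v_C$ gives $D_C=p_{v_C}-p'_{v_C}\ge 0$, so every $D_C$ is non-negative. If some $D_C>0$, then every vertex of $C$ lies in the support of $\p-\p'$, because the other terms are non-negative. Hence $C\subseteq F$, and a subset of a face would then be a non-face, which is impossible. Therefore $D=0$ and $\p=\p'$, contradicting that a move removes at least one token. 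This is the only step where pointedness is used, and it is the step I regard as the crux, namely the sign argument at the points.

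\textbf{A move from $\p\notin S$ into $S$.} In the spirit of the \IRP, I subtract circuit vectors greedily. Among all coefficient vectors $(a'_C)$ with $\bm{r}:=\p-\sum a'_C\bm{e}(C)\ge\bm{0}$, I choose one that is maximal in the componentwise order. Such a vector exists because the set of feasible coefficient vectors is finite. Maximality means each circuit $C$ contains a vertex $v$ with $r_v=0$; otherwise $a'_C$ could be increased by one. Hence the support of $\bm{r}$ contains no circuit, and by the basic fact above it is a face. Moreover $\bm{r}\neq\bm{0}$, since $\p\notin S$. Therefore the move $\bm{m}=\bm{r}$ from $\p$ to $\p'=\sum a'_C\bm{e}(C)\in S$ is legal.

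Combining the two parts shows that $S$ is exactly the set of zero positions. As expected, this half does not need pointedness. I therefore expect no real obstacle beyond the bookkeeping in the first part.
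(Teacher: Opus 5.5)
Your proof is correct and complete. The paper itself gives no proof of this theorem (it is imported from~\cite{ES1996}), so there is no in-paper argument to compare against; the natural comparison is with Proposition~\ref{prop:invPpos}.

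Your second half is exactly that proposition's mechanism, with circuit vectors in place of invariant vectors. In fact a single pass of step~1 of the \IRP~with the vectors $\bm{e}(C)$ already leaves a zero in every circuit, since later subtractions only lower entries. For circuits, the covering hypothesis of Proposition~\ref{prop:invPpos} then holds automatically, because a support containing no circuit is a face. For invariant vectors it has to be checked by hand, as in Example~\ref{ex:inv=P}.

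Conversely, Proposition~\ref{prop:invPpos} gets the absence of moves inside the candidate set for free from assumed invariance, which in practice presupposes knowledge of $\P$. Your sign argument at the points proves this directly. A posteriori, the identity $a_C=p_{v_C}$ then shows that each $\bm{e}(C)$ is an invariant vector of $\P$ in a pointed circuit complex.

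Your remark that pointedness is needed only in the first half is borne out by the non-pointed complex of Example~\ref{ex:Hgame}. There $\bm{e}(\{2,5\})+\bm{e}(\{3,6\})=(0,1,1,0,1,1)$ moves on $\{3,4,5\}$ to $\bm{e}(\{2,6\})=(0,1,0,0,0,1)$, which is \ruleset{Nim} on two equal stacks. So sums of circuit vectors can be \N-positions once no point forces $D_C\ge 0$; here $D_{\{2,6\}}=-1$ cancels $D_{\{3,6\}}=1$ at vertex $6$.
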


This theorem is applied in~\cite{ES1996} to obtain the set of \P-positions of a game that is \ruleset{PathNim} \PN{5}{3} with one one additional move, namely the move on the two end stacks. 

\begin{example} [Example 3.3~\cite{ES1996}] \label{ex:33ES}
    The complex $\Delta$ with facets 
    $\A=\{\{1,2,3\},\allowbreak \{2,3,4\},\allowbreak \{3,4,5\},  \{1,5\}\}$
    has circuits $\{1,3,5\},\{1,4\}$, and $\{2,5\}$. These are pointed circuits with points $3$, $4$, and $2$, respectively, and therefore, the set of \P-positions of the game \SN{5}{\A} are given by 
$$a\cdot \bm{e}(\{1,3,5\})+b\cdot \bm{e}(\{1,4\})+ c\cdot \bm{e}(\{2,5\})=(a+b,c,a,b,a+c).$$
\end{example} 

Now consider the game $H$ of Section~\ref{sec:pathnim}, which has the same structure as the complex of Example~\ref{ex:33ES}, but with one additional vertex. 

\begin{example} \label{ex:Hgame} The complex $\Delta$ with facets $\A=\{\{1,2,3\},\allowbreak\{2,3,4\},\allowbreak\{3,4,5\}, \allowbreak\{4,5,6\},\allowbreak\{1,6\}\}$ has  circuits 
$\mathcal{C}=\{\{1,4\},\{1,5\},\{2,5\},\{2,6\},\{3,6\}\}$. Of these, only $\{1,4\}$ and $\{3,6\}$ are pointed, so $\mathcal{C}$ is not a pointed circuit complex.
    \end{example}

Thus, for the game $H$, we cannot use Theorem~\ref{thm:pointcirc} to obtain the \P-positions.  However, we were able to solve this game by using invariant vectors.

\section{Conclusion and Future Work} 
\label{sec:future} 

We have provided a process that assists in not only finding patterns of \P-positions, but also considerably simplifies the proof that there is a move from any position $\p \notin \P$ to an option $\p' \in \P$. Specifically, we have proved results on families of \ruleset{PathNim} games where play is on at least half the stacks, the game $H = $ \SN{6}{\A} with $\A=\{\{a,b,c\},\{b,c,d\},\{c,d,e\},\{d,e,f\},\allowbreak\{a,f\}\}$, and the \ruleset{CircularNim} games \CN{7}{3} and \CN{8}{3}. Invariant vectors played an important role, and we have seen in  Section~\ref{sec:Simp} that they differ from circuits of \ruleset{SimplicialNim} games. In addition, we have found games that can be solved using the \IRP, but not with the structural results on circuits. We conclude that invariance is a more widely applicable property and point out that Proposition~\ref{prop:invPpos} is a companion result  to Theorem 3.2 of~\cite{ES1996}, giving conditions under which the set of \P-positions equals the set of linear combinations of the invariant vectors. We are interested in finding additional structural results  that classify the \P-positions in terms of invariant vectors. 

\begin{open} Under what conditions are the \P-positions of a game \SN{n}{\A} given by the linear combinations of the invariant vectors?
\end{open}

Additional open questions concern the existence or non-existence of invariant vectors. We have seen that some structures in the \P-positions limit the number and form of the invariant vectors. In several games, the vector $\bm{1}$ was the only invariant vector of the game. In particular, it is the only invariant vector for the family of games  \CN{n}{n-1}, whose \P-positions are exactly those that have equal stack heights. 

\begin{open} Under what conditions is the invariant vector $\bm{1}$  the only invariant vector of a game?
\end{open}

By Remark~\ref{rem:inv-spec}(6), we know that invariant vectors can only exist for $k>n/3$. Solved \ruleset{CircularNim} games, unpublished results on \CN{9}{4}, \CN{10}{4}, and  \CN{11}{4},  as well as additional computational evidence suggest that groups of games for which the number of allowed moves $k$ is approximately $n/3$  have invariant vector $\bm{1}$ (and possibly others).  We suspect that with the exception of \CN{n}{n-1}, these are the only games to have invariant vectors of their set of \P-positions. 

\begin{conjecture} The only \ruleset{CircularNim} games that admit invariant vectors  are the games  \CN{3k}{k+1}, \CN{3k+1}{k+1}, \CN{3k+2}{k+1} with $k \geq 1$ and \CN{n}{n-1}.  More specifically,  $\bm{1}$ is one of the invariant vectors.   
\end{conjecture}

The Matryoshka effect of the \IRP, where we not only find smaller sub-games nestled within  larger games, but also observe that in all instances the sub-games themselves have invariant vectors, leads to the following conjecture.  

\begin{conjecture} Let $G$ be a \ruleset{SetNim} game that admits invariant vectors. Then every (non-trivial)\footnote{Trivial here means a single \Nim~stack.}  sub-game of $G$ that arises through the \IRP~also admits invariant vectors. 
\end{conjecture}

As mentioned earlier, to solve \CN{6}{2}, we first need to solve the \ruleset{SetNim} game $G\defeq \{\{a,b\}, \{b,c\}, \{d\}\}$. By Example~\ref{ex:red_fam}(4),  as a sub-game of \CN{6}{2}, the same game is also a sub-game of  \CN{3k}{k}. Solving it is a crucial first step in solving all these other games. 

\begin{open} Solve the game \(\SN{4}{\{\{a,b\}, \{b,c\}, \{d\}\}}.\) 
\end{open}

In summary, the notion of invariant vectors provides a rich field of exploration, and an avenue to solve  families of \ruleset{CircularNim} and other \ruleset{SetNim} games.   We look forward to progress in this area.

\section*{Acknowledgements}
Balaji R. Kadam gratefully acknowledges the support and valuable discussions provided by A. J. Shaiju during the course of this research. 

\printbibliography

@book {Sie2013,
    AUTHOR = {Siegel, Aaron N.},
     TITLE = {Combinatorial game theory},
    SERIES = {Graduate Studies in Mathematics},
    VOLUME = {146},
 PUBLISHER = {American Mathematical Society, Providence, RI},
      YEAR = {2013},
       DOI = {10.1090/gsm/146},
       URL = {https://doi.org/10.1090/gsm/146},
}

@book {AlNoWo2019,
    AUTHOR = {Albert, Michael H. and Nowakowski, Richard J. and Wolfe, David},
     TITLE = {Lessons in Play},
PUBLISHER = {CRC Press, Boca Raton, FL},
      YEAR = {2019},
}

@book {BeCoGu2001,
    AUTHOR = {Berlekamp, Elwyn R. and Conway, John H. and Guy, Richard K.},
     TITLE = {Winning ways for your mathematical plays. {V}ol. 1-4},
   EDITION = {Second Ed.},
 PUBLISHER = {A K Peters, Ltd., Natick, MA},
      YEAR = {2001-2004},
}

@article {DHV21,
    AUTHOR = {Dufour, Matthieu and Heubach, Silvia and Vo, Anh},
     TITLE = {Circular {N}im games {CN}(7,4)},
   JOURNAL = {Integers},
  FJOURNAL = {Integers. Electronic Journal of Combinatorial Number Theory},
    VOLUME = {21B},
      YEAR = {2021},
     PAGES = {Paper No. A9, 18},
}

@article {DuHe09,
    AUTHOR = {Dufour, Matthieu and Heubach, Silvia},
     TITLE = {Circular {N}im games},
   JOURNAL = {Electron. J. Combin.},
  FJOURNAL = {Electronic Journal of Combinatorics},
    VOLUME = {20},
      YEAR = {2013},
    NUMBER = {2},
     PAGES = {Paper 22, 26},
       DOI = {10.37236/2765},
       URL = {https://doi.org/10.37236/2765},
}

@article {GHHC20,
    AUTHOR = {Gurvich, Vladimir and Heubach, Silvia and Ho, Nhan Bao and
              Chikin, Nikolay},
     TITLE = {Slow {$k$}-{N}im},
   JOURNAL = {Integers},
  FJOURNAL = {Integers. Electronic Journal of Combinatorial Number Theory},
    VOLUME = {20},
      YEAR = {2020},
     PAGES = {Paper No. G3, 19},
}

@article {KaSh2025,
    AUTHOR = {Kadam, Balaji R. and Shaiju, A. J.},
     TITLE = {Variations of the {N}im game played on a cube},
   JOURNAL = {Integers},
  FJOURNAL = {Integers. Electronic Journal of Combinatorial Number Theory},
    VOLUME = {25},
      YEAR = {2025},
     PAGES = {Paper No. G2, 14},
}

@article {Hor2010,
    AUTHOR = {Horrocks, David},
     TITLE = {Winning positions in simplicial {N}im},
   JOURNAL = {Electron. J. Combin.},
  FJOURNAL = {Electronic Journal of Combinatorics},
    VOLUME = {17},
      YEAR = {2010},
    NUMBER = {1},
     PAGES = {Research Paper 84, 13},
       DOI = {10.37236/356},
       URL = {https://doi.org/10.37236/356},
}

@article {ES1996,
    AUTHOR = {Ehrenborg, Richard and Steingr\'imsson, Einar},
     TITLE = {Playing {N}im on a simplicial complex},
   JOURNAL = {Electron. J. Combin.},
  FJOURNAL = {Electronic Journal of Combinatorics},
    VOLUME = {3},
      YEAR = {1996},
    NUMBER = {1},
     PAGES = {Research Paper 9, approx. 33},
       DOI = {10.37236/1233},
       URL = {https://doi.org/10.37236/1233},
}

@article {Hol1958,
    AUTHOR = {Holladay, John C.},
     TITLE = {Matrix {N}im},
   JOURNAL = {Amer. Math. Monthly},
  FJOURNAL = {American Mathematical Monthly},
    VOLUME = {65},
      YEAR = {1958},
     PAGES = {107--109},
       DOI = {10.2307/2308886},
       URL = {https://doi.org/10.2307/2308886},
}

@article {Mo1910,
    AUTHOR = {Moore, E. H.},
     TITLE = {A generalization of the game called nim},
   JOURNAL = {Ann. of Math. (2)},
  FJOURNAL = {Annals of Mathematics. Second Series},
    VOLUME = {11},
      YEAR = {1910},
    NUMBER = {3},
     PAGES = {93--94},
       DOI = {10.2307/1967321},
       URL = {https://doi.org/10.2307/1967321},
}

@article {Bo1901,
    AUTHOR = {Bouton, Charles L.},
     TITLE = {Nim, a game with a complete mathematical theory},
   JOURNAL = {Ann. of Math. (2)},
  FJOURNAL = {Annals of Mathematics. Second Series},
    VOLUME = {3},
      YEAR = {1901/02},
    NUMBER = {1-4},
     PAGES = {35--39},
       DOI = {10.2307/1967631},
       URL = {https://doi.org/10.2307/1967631},
}

\end{document}